\definecolor{bgcolor}{rgb}{0.8,1,1}
\definecolor{bgcolor2}{rgb}{0.8,1,0.8}
\definecolor{niceblue}{rgb}{0.0,0.19,0.56}
\newtheorem{theorem}{Theorem}
\newtheorem{proposition}{Proposition}
\newtheorem{assumption}{Assumption}
\newtheorem{remark}{Remark}%
\newtheorem{lemma}{Lemma}%
\newcommand*{\R}{{\mathbb R}}
\newcommand*{\E}{{\mathbb E}}
\newcommand*{\PP}{{\mathbb P}}
\renewcommand{\|}{\parallel}
\newcommand\ev[1]{ \langle #1 \rangle}
\newcommand{\norm}[1]{\left\lVert#1\right\rVert}
\newcommand{\sqn}[1]{{\left\lVert#1\right\rVert}^2}
\newcommand{\abs}[1]{\left\lvert#1\right\rvert}
\newcommand{\eqdef}{\coloneqq}
\newtcolorbox{remarkbox}{
    colback=gray!20,    
    colframe=black,     
    boxrule=0.5mm,      
    sharp corners,      
    left=2mm,           
    right=2mm,          
    top=2mm,            
    bottom=2mm          
}
\definecolor{PineGreen}{RGB}{0,110,51}
\definecolor{BrickRed}{RGB}{143,20,2}
\newcommand{\cmark}{{\color{PineGreen}\ding{51}}}%
\newcommand{\xmark}{{\color{BrickRed}\ding{55}}}%
\newcolumntype{Y}{>{\centering\arraybackslash}X}
\definecolor{mycolor}{RGB}{0,150,70}
\newcommand{\algname}[1]{{\color{PineGreen}\sf  #1}\xspace}
\newcommand{\circledOne}{\text{\ding{172}}}
\newcommand{\circledTwo}{\text{\ding{173}}}
\newcommand{\circledThree}{\text{\ding{174}}}
\newcommand{\circledFour}{\text{\ding{175}}}
\newcommand{\circledFive}{\text{\ding{176}}}
\newcommand{\circledSix}{\text{\ding{177}}}
\newcommand{\cO}{{\cal O}}
\def\clip{\texttt{clip}}
\def\clip{\texttt{clip}}
\newlength{\dhatheight}
\newcommand{\mytag}[1]{%
  \refstepcounter{equation}%
  \edef\@currentlabel{\theequation}%
  {({\@currentlabel})}%
  \@bsphack
  \begingroup
    \@onelevel@sanitize\@currentlabelname
    \edef\@currentlabelname{%
      \expandafter\strip@period\@currentlabelname\relax.\relax\@@@%
    }%
    \protected@write\@auxout{}{%
      \string\newlabel{#1}{%
        {\@currentlabel}%
        {\thepage}%
        {\@currentlabelname}%
        {\@currentHref}{}%
      }%
    }%
  \endgroup
  \@esphack
}
\title{\bf Convergence of Clipped-SGD for Convex $(L_0,L_1)$-Smooth Optimization with Heavy-Tailed Noise}
\author{\begin{tabular}{c}
     {\bf Savelii Chezhegov}\\
     {\small MCAS}
\end{tabular}\quad \begin{tabular}{c}
     {\bf Aleksandr Beznosikov}\\
     {\small MCAS} 
\end{tabular} \quad  \begin{tabular}{c}
     {\bf Samuel Horv\'ath} \\
     {\small MBZUAI} 
\end{tabular} \\
\begin{tabular}{c}
     {\bf Eduard Gorbunov}\thanks{Corresponding author: \texttt{eduard.gorbunov@mbzuai.ac.ae}.}\\
     {\small MBZUAI}
\end{tabular}}
\begin{document}

\maketitle

\begin{abstract}
  Gradient clipping is a widely used technique in Machine Learning and Deep Learning (DL), known for its effectiveness in mitigating the impact of heavy-tailed noise, which frequently arises in the training of large language models. Additionally, first-order methods with clipping, such as \algname{Clip-SGD}, exhibit stronger convergence guarantees than \algname{SGD} under the $(L_0,L_1)$-smoothness assumption, a property observed in many DL tasks. However, the high-probability convergence of \algname{Clip-SGD} under both assumptions -- heavy-tailed noise and $(L_0,L_1)$-smoothness -- has not been fully addressed in the literature. In this paper, we bridge this critical gap by establishing the first high-probability convergence bounds for \algname{Clip-SGD} applied to convex $(L_0,L_1)$-smooth optimization with heavy-tailed noise. Our analysis extends prior results by recovering known bounds for the deterministic case and the stochastic setting with $L_1 = 0$ as special cases. Notably, our rates avoid exponentially large factors and do not rely on restrictive sub-Gaussian noise assumptions, significantly broadening the applicability of gradient clipping.
\end{abstract}

\tableofcontents

\section{Introduction}
Stochastic optimization forms the backbone of modern machine learning \citep{shalev2014understanding} and deep learning \citep{goodfellow2016deep}, providing the computational efficiency required to train models at scale. While full-gradient methods offer precise optimization, they are often impractical for real-world applications due to their prohibitive computational costs and memory demands. In contrast, Stochastic Gradient Descent (\algname{SGD}) \citep{robbins1951stochastic} has emerged as the de facto standard for training deep learning models, thanks to its simplicity, scalability, and effectiveness in high-dimensional settings. However, despite its widespread use, \algname{SGD} alone is often insufficient for capturing the full complexity of modern optimization problems. 

Gradient clipping is one of the most widely adopted extensions of \algname{SGD}, providing a simple yet powerful mechanism for controlling gradient magnitudes in the presence of noisy updates. \algname{Clip-SGD} and its variants have demonstrated significant practical utility across a range of challenging machine learning tasks. For instance, \citet{pascanu2013difficulty} employed gradient clipping to stabilize the training of recurrent neural networks, which are particularly prone to gradient explosions due to their architectural structure. More recently, gradient clipping has become a crucial component in the training of large language models (LLMs) such as BERT \citep{devlin2019bert}, GPT-3 \citep{brown2020language}, Switch Transformers \citep{fedus2022switch}, and LLaMA \citep{touvron2023llama}.

Gradient clipping is particularly effective in stabilizing the training of deep learning models in the presence of \emph{heavy-tailed noise} in stochastic gradients. This phenomenon, where the probability density of gradient noise decays polynomially, leading to potentially unbounded variance, has been observed in real-world settings such as the pre-training of BERT models \citep{zhang2020adaptive}. Under such conditions, classical \algname{SGD} can suffer from divergence, even in expectation, making it poorly suited for training in these high-variance environments. In contrast, gradient clipping not only mitigates these explosive gradient updates but also plays a critical role in establishing \emph{high-probability} convergence guarantees. Recent studies \citep{gorbunov2020stochastic, cutkosky2021high, sadiev2023high, nguyen2023improved, chezhegov2024gradient} have shown that employing a clipping threshold that \textit{grows with the number of iterations} can yield high-probability convergence bounds with only polylogarithmic dependence on the confidence level. In contrast, neither classical \algname{SGD} nor popular adaptive methods such as \algname{AdaGrad} \citep{duchi2011adaptive} and \algname{Adam} \citep{kingma2014adam} can achieve such favorable convergence bounds \citep{sadiev2023high, chezhegov2024gradient}, highlighting the advantages of gradient clipping in handling heavy-tailed noise.

Gradient clipping is also particularly well-suited for optimization problems characterized by relaxed smoothness assumptions, which better capture the complex landscapes typical of deep learning. For example, \citet{zhang2019gradient} empirically demonstrated that the local smoothness constant along the training trajectory of various deep learning models often scales linearly with the gradient norm. This observation led \citet{zhang2019gradient} to the introduction of the more general $(L_0, L_1)$-smoothness assumption, which strictly extends the classical $L$-smoothness by allowing the smoothness constant to depend on the gradient magnitude. This assumption aligns more closely with the real-world behavior of deep learning models, where the loss surface can vary significantly across different regions. Crucially, it has been shown that first-order methods incorporating gradient clipping can achieve faster convergence rates under $(L_0, L_1)$-smoothness compared to their unclipped counterparts \citep{zhang2019gradient, zhang2020improved, koloskova2023revisiting, gorbunov2024methods, vankov2024optimizing}. However, a critical aspect of all these results is the careful selection of the clipping threshold, which must be set as a specific \emph{constant determined by the parameters $L_0$ and $L_1$}.

This observation highlights a fundamental mismatch in the design of gradient clipping strategies: under the $(L_0, L_1)$-smoothness assumption, the clipping threshold is typically set as a fixed constant determined by problem-specific parameters ($L_0$ and $L_1$), while in the presence of heavy-tailed noise, the threshold is often required to grow with the total number of iterations to ensure stability and convergence. This apparent conflict raises a critical open question:
\begin{gather*}
    \textit{How should the clipping threshold be chosen to effectively address}\\
    \textit{heavy-tailed noise and $(L_0, L_1)$-smoothness?}
\end{gather*}

\paragraph{Our contribution.} In this paper, we resolve the above open question by providing the first high-probability convergence analysis of \algname{Clip-SGD} under the joint assumptions of heavy-tailed noise and $(L_0, L_1)$-smoothness. Specifically, for convex $(L_0,L_1)$-smooth problems with stochastic gradients having bounded central $\alpha$-th moment for some $\alpha \in (1,2]$, we establish a high-probability convergence rate of  
\begin{equation*}
    \tilde{\mathcal{O}}\left(\max\left\{\frac{L_0R_0^2}{K}, \frac{\max\{1, L_1R_0\}R_0\sigma}{K^{\nicefrac{(\alpha-1)}{\alpha}}}\right\}\right),
\end{equation*}
where $\tilde{\cO}$ hides numerical and polylogarithmic factors, $K = \tilde{\Omega}\left(\frac{(L_1R_0)^{2+\alpha}}{\delta}\right)$ is the number of iterations required to achieve a high-probability bound with confidence level $1-\delta$, and $R_0$ is an upper bound on the initial distance to the solution. Our result not only recovers the known deterministic convergence rates for generalized smoothness \citep{gorbunov2024methods, vankov2024optimizing}, but also fully reproduces the stochastic convergence guarantees in the special case of $L_1 = 0$ \citep{sadiev2023high}. Importantly, our analysis avoids the exponentially large factors that can arise from the generalized smoothness assumption, marking a significant improvement over previous approaches. For a detailed comparison, see \cref{tab:comparison}.

\section{Preliminaries}

\begin{table}[t!]
\centering
\caption{Comparison of the state-of-the-art high-probability convergence results for \algname{Clip-SGD} applied to convex problems satisfying the heavy-tailed noise assumption (Assumption~\ref{asm: stochastic}) and/or $(L_0,L_1)$-smoothness assumption (Assumption~\ref{asm: smoothness}).}
\resizebox{\columnwidth}{!}{
\begin{threeparttable}
\begin{tabular}{|c|c|c|c|c|c|}
\hline
\multirow{2}{*}{\textbf{Reference}} & 
\multirow{2}{*}{\textbf{$L$-smooth}} & 
\multirow{2}{*}{\textbf{$(L_0, L_1)$-smooth}} & 
\multicolumn{2}{c|}{\textbf{Stochasticity}} & 
\multirow{2}{*}{\textbf{Complexity}}
\\
\cline{4-5}
& & & \textbf{Light tails} &  \textbf{Heavy tails} &\\
\hline
\cite{sadiev2023high} & \cmark & \cmark\xmark\tnote{\color{blue}(1)} & \cmark & \cmark & $\tilde{\mathcal{O}}\left(\frac{LR_0^2}{K} + \frac{R_0\sigma}{K^{(\alpha - 1)/{\alpha}}}\right)$\\
\hline
\makecell{\cite{gorbunov2024methods}\\ \cite{vankov2024optimizing}\\
\cite{lobanov2024linear}} & \cmark & \cmark & \xmark\tnote{\color{blue}(2)} & \xmark & $\mathcal{O}\left(\frac{LR_0^2}{K}\right)$\tnote{\color{blue}(3)}\\
\hline
\cite{gaash2025convergence} & \cmark & \cmark & \cmark & \xmark\tnote{\color{blue}(4)} &  $\tilde{\mathcal{O}}\left(\frac{L_0R_0^2}{K} + \frac{R_0\sigma}{\sqrt{K}} + (L_1R_0)^2\right)$\\
\hline
{\bf This work} & \cmark & \cmark & \cmark & \cmark & $\tilde{\mathcal{O}}\left(\max\left\{\frac{L_0R_0^2}{K}, \frac{\max\{1, L_1R_0\}R_0\sigma}{K^{\nicefrac{(\alpha-1)}{\alpha}}}\right\}\right)$\tnote{\color{blue}(5)}~~\\
\hline
\end{tabular}
\label{tab:comparison}
\begin{tablenotes}
\item [{\color{blue}(1)}] \citet{sadiev2023high} make all assumptions on a ball centered at $x^*$ and having radius $\sim 2R_0$ and show that the iterates do not escape this ball with high probability. On such a set, $(L_0, L_1)$-smoothness implies $L$-smoothness with $L = L_0(1 + L_1R_0\exp(L_1 R_0))$, making the final bound dependent on the exponentially large factor of $L_1 R_0$.
\item [{\color{blue}(2)}] Deterministic result.
\item [{\color{blue}(3)}] \citet{gorbunov2024methods} prove this bound for $K = \Omega((L_1 R_0)^2)$, while \citet{vankov2024optimizing, lobanov2024linear} obtain it for $K = \tilde\Omega(L_1 R_0)$.

\item [{\color{blue}(4)}] \citet{gaash2025convergence} derive their result under the assumption that the noise is sub-Gaussian \eqref{eq:sub_Gaussian}. 
\item [{\color{blue}(5)}] This bound holds for $K = \tilde{\Omega}\left(\frac{(L_1R_0)^{2+\alpha}}{\delta}\right)$.
\end{tablenotes}
\end{threeparttable}
}
\end{table}

\paragraph{Notation.} The Euclidean norm in $\mathbb{R}^d$ is denoted as $\norm{x} = \sqrt{\ev{x, x}}$. The norm $\norm{X}_2$, where $X \in \mathbb{R}^{d\times d}$, is the spectral norm of the matrix. The $\E_{\xi}[\cdot]$ denotes the expectation w.r.t. random variable $\xi$. The ball with the center at $x \in \mathbb{R}^d$ and radius $r$ is defined as a $B_r(x) := \{y \in \mathbb{R}^d| \norm{x - y} \leq r\}$. The clipping operator is denoted as $\clip(x, \lambda) := \min\left\{1, \frac{\lambda}{\norm{x}}\right\}x$. We often use $R_0$ to denote some upper bound on the distance between the starting point and the solution of the problem.

\paragraph{Problem.} We focus on the classical stochastic optimization problem, which can be stated as
\begin{align}
    \min_{x \in \mathbb{R}^d} \left\{f(x) := \mathbb{E}_{\xi \sim \mathcal{D}}[f(x, \xi)]\right\}. \label{eq:main_problem}
\end{align}
This formulation is foundational in machine learning \citep{shalev2014understanding}, where $f$ represents the loss function of the model, $x$ are the parameters to be optimized, $\mathcal{D}$ is the underlying data distribution, and $\xi$ captures the stochasticity introduced by sampling the data.
   
\paragraph{Assumptions.} In this part, we introduce and briefly discuss the assumptions used in the analysis. First, let us introduce the assumption of convexity.
\begin{assumption}[Convexity]
    \label{asm: convexity}
    The function $f$ is convex, i.e., for all $x, y \in \mathbb{R}^d$ the next inequality holds:
    \begin{align*}
        f(y) \geq f(x) + \ev{\nabla f(x), y - x}.
    \end{align*}
\end{assumption}

Next, we will use the assumption of $(L_0, L_1)$-smoothness.
\begin{assumption}[$(L_0, L_1)$-smoothness]
    \label{asm: smoothness}
    The function $f$ is $(L_0, L_1)$-smooth, i.e. for all $x, y \in \mathbb{R}^d$ the next inequality holds:
    \begin{align*}
        \|\nabla f(x) - \nabla f(y)\| \leq \left(L_0 + L_1\sup\limits_{u\in [x,y]}\norm{\nabla f(u)}\right)\|x-y\|.
    \end{align*}
\end{assumption}

Historically, the first version of the above assumption was formulated by \citet{zhang2019gradient} for twice differentiable functions as follows:
\begin{align*}
    \norm{\nabla^2 f(x)}_2\leq L_0 + L_1\norm{\nabla f(x)}, \quad x\in \R^d.
\end{align*}
Later, it was generalized to the case of functions not necessarily having second derivatives by \citet{zhang2020improved}. Assumption~\ref{asm: smoothness} was first introduced by \citet{chen2023generalized}, and it is equivalent to the one proposed by \citet{zhang2020improved}. This assumption is strictly more general than
\begin{align}
    \norm{\nabla f(y) - \nabla f(x)} \leq L\norm{y - x}, \quad \forall x,y\in \R^d, \label{eq:L_smoothness}
\end{align}
known as $L$-smoothness: it reduces to the standard $L$-smoothness with $L = L_0$ if $L_1 = 0$. Moreover, one can construct functions that satisfy Assumption~\ref{asm: smoothness} but not $L$-smoothness, e.g., exponent of norm $f(x) = \exp(\norm{x}) + \exp(-\norm{x})$, power of norm $f(x) = \norm{x}^{n}$ for $n>2$, and exponent of the linear function $f(x) = \exp(\ev{a,x})$ \citep{chen2023generalized, gorbunov2024methods}.

Below, we also list some useful properties of \cref{asm: smoothness}.
\begin{proposition}[\cite{gorbunov2024methods}, Lemma 2.2]
    \label{prop: smooth-equiv}
    Suppose that \cref{asm: smoothness} holds. Then,
    \begin{align*}
        \nu\norm{\nabla f(x)}^2 \leq 2(L_0 + L_1\norm{\nabla f(x)})(f(x) - f^*),
    \end{align*}
    where $\nu$ is the solution of $x \exp(x) = 1$.
\end{proposition}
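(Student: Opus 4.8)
The plan is to reduce the claim to a one-dimensional estimate along the ray through $x$ in the anti-gradient direction, combined with a Grönwall-type bound on how fast the gradient can grow along that ray. Assume $\nabla f(x)\neq 0$, $f^*>-\infty$, and $L_1>0$ (if one of these fails the statement is trivial or reduces to the classical $L$-smooth inequality $\norm{\nabla f(x)}^2\le 2L_0(f(x)-f^*)$, which implies the claim since $\nu<1$). Write $v:=\nabla f(x)$, $x_t:=x-tv$ for $t\ge 0$, and $L:=L_0+L_1\norm{v}$. First note that \cref{asm: smoothness} is a \emph{local} Lipschitz condition on $\nabla f$: applying it on a short segment $[x,y]$ and estimating $\sup_{u\in[x,y]}\norm{\nabla f(u)}$ by bootstrapping gives $\sup_{u\in[x,y]}\norm{\nabla f(u)}\le\frac{\norm{\nabla f(x)}+L_0\norm{y-x}}{1-L_1\norm{y-x}}$, which shows $\nabla f$ is continuous; hence $t\mapsto\norm{\nabla f(x_t)}$ is continuous.

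Next I would control the gradient growth along the ray. Set $r(t):=\norm{\nabla f(x_t)-\nabla f(x)}$, so $r(0)=0$. Applying \cref{asm: smoothness} to the segment $[x_t,x_{t+h}]$, dividing by $h$ and letting $h\to 0^+$ (using continuity) yields $D^+ r(t)\le\big(L_0+L_1\norm{\nabla f(x_t)}\big)\norm{v}\le\big(L+L_1 r(t)\big)\norm{v}$, where the second inequality uses $\norm{\nabla f(x_t)}\le\norm{v}+r(t)$. A comparison (Grönwall) argument for this differential inequality gives
\begin{equation*}
r(t)\ \le\ \frac{L}{L_1}\big(\exp(L_1\norm{v}\,t)-1\big),\qquad t\ge 0.
\end{equation*}
Then I would establish a ``generalized descent lemma'' along the ray: from $\tfrac{d}{dt}f(x_t)=-\ev{\nabla f(x_t),v}\le-\norm{v}^2+r(t)\,\norm{v}$, integrating over $[0,T]$, inserting the bound on $r$, and abbreviating $b:=L_1\norm{v}\,T$, a short computation gives
\begin{equation*}
f(x_T)\ \le\ f(x)-\frac{b\,\norm{v}}{L_1}+\frac{L}{L_1^2}\big(e^{b}-1-b\big).
\end{equation*}

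Finally I would pick the step $T:=\dfrac{\nu}{L_0+L_1\norm{\nabla f(x)}}$, so that $b=\nu s$ with $s:=\tfrac{L_1\norm{v}}{L}\in(0,1]$. Substituting into the descent lemma and using $f^*\le f(x_T)$ gives $f(x)-f^*\ge\frac{\norm{v}^2}{L}\Big(\nu-\frac{e^{\nu s}-1-\nu s}{s^2}\Big)$, so it suffices to verify $e^{\nu s}-1-\nu s\le\frac{\nu s^2}{2}$, equivalently $e^{w}\le 1+w+\frac{w^2}{2\nu}$ for $w:=\nu s\in(0,\nu]$. This follows from $e^{w}-1-w=\sum_{k\ge 2}\frac{w^k}{k!}\le\frac{w^2}{2}\sum_{k\ge 2}\frac{w^{k-2}}{(k-2)!}=\frac{w^2}{2}e^{w}$ (using $\frac{2}{k!}\le\frac{1}{(k-2)!}$ for $k\ge 2$) together with $e^{w}\le e^{\nu}=\frac{1}{\nu}$, valid by the defining relation $\nu e^{\nu}=1$. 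Multiplying through by $2L>0$ yields the proposition.

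The main obstacle I anticipate is making the Grönwall step rigorous without twice-differentiability: \cref{asm: smoothness} only gives local Lipschitzness of $\nabla f$, so one must first establish continuity of $\nabla f$ and control $\sup_{u\in[x_t,x_{t+h}]}\norm{\nabla f(u)}$ via bootstrapping, and then invoke a comparison theorem for Dini derivatives rather than the standard ODE version. A second, milder point is checking that a single step multiplier works uniformly over $L_0/L\in[0,1)$; the value $\nu$ is chosen precisely because $\nu e^{\nu}=1$ makes the elementary inequality above tight exactly in the regime $L_0\to 0$, where $(L_0,L_1)$-smoothness genuinely departs from $L$-smoothness.
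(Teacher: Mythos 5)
Your argument is correct, and it is essentially the standard proof of this fact: the paper itself does not prove the proposition but imports it from \citet{gorbunov2024methods} (Lemma~2.2), whose proof likewise takes a gradient step of length $\nu/(L_0+L_1\norm{\nabla f(x)})$, bounds the resulting decrease via a generalized descent lemma with an $\exp(L_1\norm{y-x})$ factor, and uses $\nu e^{\nu}=1$ to absorb that factor; your version integrates the gradient-growth bound directly, which gives a slightly sharper intermediate estimate but the same constant. The one technical obstacle you flag, making the Gr\"onwall/continuity step rigorous, can be bypassed entirely: Proposition~\ref{prop: exponential} (quoted in the paper from \citet{chen2023generalized}) already supplies the bound $\norm{\nabla f(x_t)-\nabla f(x)}\le (L_0+L_1\norm{\nabla f(x)})\exp(L_1 t\norm{v})\,t\norm{v}$ along the ray, which is all your integration step needs (one checks that with this slightly weaker bound the final inequality reduces to $\nu s e^{\nu s}-e^{\nu s}+1\le \nu s^2/2$ for $s\in(0,1]$, which also holds). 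All the elementary estimates check out, including $e^{w}-1-w\le \tfrac{w^2}{2}e^{w}\le \tfrac{w^2}{2\nu}$ for $w\in(0,\nu]$.
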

\begin{proposition}[\cite{chen2023generalized}]
    \label{prop: exponential}
    \cref{asm: smoothness} is equivalent to 
    \begin{align*}
        \norm{\nabla f(y) - \nabla f(x)} \leq (L_0 + L_1\norm{\nabla f(x)})\exp\left(L_1\norm{y - x}\right)\norm{y - x}, \quad \forall x,y \in \R^d.
    \end{align*}
\end{proposition}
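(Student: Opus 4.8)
The plan is to prove both implications by reducing to a one‑dimensional estimate along the segment $x_t := x + t(y-x)$, $t\in[0,1]$, between two fixed points $x,y\in\R^d$. Write $c := \norm{y-x}$ and $\psi(t) := \norm{\nabla f(x_t)}$, and note that the segment $[x_s,x_r]$ equals $\{x_q : q\in[s,r]\}$, so $\sup_{u\in[x_s,x_r]}\norm{\nabla f(u)} = \sup_{q\in[s,r]}\psi(q)$. The case $L_1=0$ is trivial, since then both inequalities reduce to $L_0$‑smoothness and $\exp(L_1 c)=1$; so assume $L_1>0$.

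For the direction ``exponential bound $\Rightarrow$ \cref{asm: smoothness}'', note first that the exponential bound itself forces $\psi(t)\le \psi(0) + (L_0+L_1\psi(0))\exp(L_1 c)\,c$ for all $t\in[0,1]$, hence $M := \sup_{u\in[x,y]}\norm{\nabla f(u)}<\infty$. Taking a uniform partition $0=t_0<\dots<t_N=1$, applying the exponential bound to each pair $(x_{t_i},x_{t_{i+1}})$, and summing with the triangle inequality gives
\[
\norm{\nabla f(y)-\nabla f(x)} \;\le\; \sum_{i=0}^{N-1}\bigl(L_0 + L_1\psi(t_i)\bigr)\exp\!\bigl(\tfrac{L_1 c}{N}\bigr)\tfrac{c}{N} \;\le\; \exp\!\bigl(\tfrac{L_1 c}{N}\bigr)\bigl(L_0 + L_1 M\bigr)c ,
\]
and letting $N\to\infty$ yields exactly \cref{asm: smoothness}.

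For the converse, the key object is the running supremum $\Phi(t) := \sup_{r\in[0,t]}\psi(r)$ together with $A(t) := L_0 + L_1\Phi(t)$; both are nondecreasing and $A(0)=L_0+L_1\norm{\nabla f(x)}<\infty$. Applying \cref{asm: smoothness} to $(x_s,x_r)$ with $s\le r\le t$ gives $\psi(r)\le \psi(s) + (L_0+L_1\Phi(r))(r-s)c \le \Phi(s) + A(t)(t-s)c$, and taking the supremum over $r\in[s,t]$ yields the self‑referential estimate $\Phi(t)-\Phi(s)\le A(t)(t-s)c$, equivalently $A(t)\bigl(1 - L_1 c(t-s)\bigr)\le A(s)$, for all $0\le s\le t\le 1$. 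Fixing $t\in[0,1]$ and a uniform partition of $[0,t]$ fine enough that $1-L_1ct/N>0$, iterating this bound along the partition (which, step by step, also propagates finiteness of $A$ from $A(0)$) gives $A(t)\le A(0)(1-L_1ct/N)^{-N}$; sending $N\to\infty$ gives $L_0+L_1\sup_{r\in[0,t]}\norm{\nabla f(x_r)}\le (L_0+L_1\norm{\nabla f(x)})\exp(L_1ct)$. Substituting $t=1$ into \cref{asm: smoothness} then produces the exponential bound.

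The main obstacle is this converse Grönwall step: the inequality $\Phi(t)-\Phi(s)\le A(t)(t-s)c$ has the unknown $A(t)$ on both sides, so a direct rearrangement is only valid once $A(t)<\infty$ is known, while the usual continuous Grönwall lemma does not apply verbatim because $\Phi$ is a priori merely monotone (not obviously absolutely continuous). The discrete chaining circumvents this: the rearranged inequality $A(t_{j+1})(1-L_1ct/N)\le A(t_j)$ is valid in $[0,\infty]$, so finiteness propagates from $A(t_0)=A(0)$ to every $A(t_j)$, and the resulting finite product telescopes to the exponential via $(1-a/N)^{-N}\to e^{a}$. The remaining ingredients — the triangle‑inequality summation, the a priori bound on $M$, and the $L_1=0$ case — are routine.
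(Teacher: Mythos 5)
The paper does not actually prove \cref{prop: exponential} --- it is imported wholesale from \citet{chen2023generalized} --- so there is no in-paper argument to compare against. Your two-directional chaining proof is essentially the standard route for this equivalence: discretize the segment, apply the hypothesis on each subinterval, and pass to the limit, with a discrete Gr\"onwall iteration for the converse. The forward direction (a priori bound on $M$, triangle-inequality summation, $\exp(L_1c/N)\to 1$) is correct, and the algebra of the converse --- $\Phi(t)-\Phi(s)\le A(t)(t-s)c$, rearranged and telescoped via $(1-L_1ct/N)^{-N}\to e^{L_1ct}$ --- is also correct, \emph{provided} all the suprema involved are finite.

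That proviso is where your argument has a genuine, though repairable, flaw. You assert that the rearranged inequality $A(t_{j+1})(1-L_1ct/N)\le A(t_j)$ ``is valid in $[0,\infty]$, so finiteness propagates.'' It is not: what \cref{asm: smoothness} actually yields is $A(t_{j+1})\le A(t_j)+L_1c\,(t/N)\,A(t_{j+1})$, which is vacuously true when $A(t_{j+1})=\infty$ and cannot be rearranged in that case, since the rearrangement subtracts $L_1c(t/N)A(t_{j+1})$ from both sides --- an $\infty-\infty$ operation. The rearranged form with $A(t_{j+1})=\infty$ would assert $A(t_j)=\infty$, which is precisely the finiteness claim you are trying to propagate, not something you have derived; if $\nabla f$ were unbounded on some subsegment, \cref{asm: smoothness} would be vacuous on every segment containing it and your chain produces no contradiction. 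The clean fix is to record that $\nabla f$ is continuous (the standing regularity assumption in this literature, without which the ``sup'' in \cref{asm: smoothness} is partly vacuous anyway), so that $\sup_{u\in[x,y]}\norm{\nabla f(u)}$ is finite on the compact segment a priori; with that one observation every quantity in your Gr\"onwall step is finite and the rest of the proof goes through verbatim.
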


Finally, we assume unbiasedness and boundedness of the $\alpha$-\textit{th} central moment.
\begin{assumption}[Stochastic oracle]
    \label{asm: stochastic}
    The stochastic oracle $\nabla f(x, \xi)$ is unbiased and have bounded $\alpha$-\textit{th} central moment with $\alpha \in (1, 2]$, i.e.
    \begin{align*}
        \mathbb{E}\left[\nabla f(x, \xi)\right] = \nabla f(x); \qquad
        \mathbb{E}\left[\norm{\nabla f(x, \xi) - \nabla f(x)}^\alpha\right] \leq \sigma^\alpha.
    \end{align*}
\end{assumption}
This assumption has become relatively standard -- it has already been considered in \citep{zhang2020adaptive,cutkosky2021high,sadiev2023high, nguyen2023improved, chezhegov2024gradient}. Prominent examples of distributions that satisfy \cref{asm: stochastic} include Lévy $\alpha$-stable noise, as well as synthetic one-dimensional distributions that can be easily constructed. In turn, case $\alpha=2$ corresponds to one of the most classical assumptions on the stochastic oracle \citep{nemirovski2009robust, ghadimi2013stochastic, takavc2013mini}.

\paragraph{High-probability convergence bounds.} A vast body of work in stochastic optimization has focused on establishing convergence guarantees in expectation. Specifically, for an iterative process $\{x_k\}_{k=0}$ and a target criterion $C(\{x_k\})$, the typical goal is to identify the smallest number of iterations $K$ needed to ensure that $\mathbb{E}\left[C\left(\{x_k\}_{k=0}^{K-1}\right)\right] \leq \varepsilon$ is satisfied. However, this expectation-based approach only captures the average performance of the algorithm and does not fully reflect the variability inherent in the stochastic process. In contrast, high-probability bounds, which ensure that the desired criterion is satisfied with high confidence, are often more informative. These bounds take the form $\mathbb{P}\left\{C\left(\{x_k\}_{k=0}^{K-1}\right) \leq \varepsilon\right\} \geq 1 - \delta$, directly controlling the likelihood of worst-case deviations.

While it is possible to derive high-probability bounds from expectation bounds using tools like Markov's inequality, this approach typically results in convergence rates with an \emph{inverse-power} dependence on $\delta$. Modern methods aim for much tighter, \emph{polylogarithmic} dependence on $\frac{1}{\delta}$, which significantly reduces the required number of iterations for a given confidence level. Achieving this improved scaling generally requires either imposing stronger assumptions, e.g., sub-Gaussian noise
\begin{align}
 \E\left[\exp\left(\nicefrac{\sqn{\nabla f(x, \xi) - \nabla f(x)}}{\sigma^2}\right)\right] \leq \exp(1),\label{eq:sub_Gaussian}
\end{align}
or employing advanced techniques such as gradient clipping, truncation, or other variance control mechanisms.

\section{Related Work}

\paragraph{Convergence under $(L_0, L_1)$-smoothness.} Early studies on the convergence of first-order methods under $(L_0, L_1)$-smoothness has primarily focused on the non-convex setting. have primarily focused on the non-convex setting. \citet{zhang2019gradient} introduced this smoothness condition and demonstrated that \algname{Clip-GD} achieves an iteration complexity of $\cO\left(\max\left\{\nicefrac{L_0 \Delta}{\varepsilon^2}, \nicefrac{(1+L_1^2)\Delta}{L_0}\right\}\right)$ with $\Delta \eqdef f(x) - \inf_{x\in\R^d}f(x)$ for finding $\varepsilon$-approximate first-order stationary point. The dominant term in the derived bound is independent of $L_1$ and can be significantly smaller than the complexity of \algname{GD}. This foundational work has since been extended to include methods with momentum and clipping \citep{zhang2020improved}, as well as variants like \algname{Normalized GD} \citep{zhao2021convergence, chen2023generalized}, its momentum-based counterpart \citep{hubler2024parameter}, its distributed version with compression \citep{khirirat2024error}, \algname{SignGD} \citep{crawshaw2022robustness}, adaptive methods like \algname{AdaGrad} and \algname{Adam} \citep{faw2023beyond, wang2022provable, wang2023convergence, li2024convergence}, and Armijo-like gradient methods \citep{bilel2024complexities}. More recently, \citet{vankov2024optimizing} further improved these results by deriving a tighter complexity bound for \algname{Clip-GD} of $\cO\left(\max\left\{\nicefrac{L_0 \Delta}{\varepsilon^2}, \nicefrac{L_1\Delta}{\varepsilon}\right\}\right)$.

In the convex setting, the analysis is more recent and less developed. \citet{koloskova2023revisiting} provided convergence guarantees for \algname{Clip-GD} under convexity, $(L_0, L_1)$-smoothness and $L$-smoothness, deriving a complexity bound of $\cO\left(\max\left\{\nicefrac{(L_0 + \lambda L_1)R_0^2}{\varepsilon}, \sqrt{\nicefrac{R_0^4L(L_0 + \lambda L_1)^2}{\lambda^2\varepsilon}}\right\}\right)$. The leading term in this complexity bound is independent of $L_1$ and $L$, if $\lambda \sim \nicefrac{L_0}{L_1}$, and can significantly outperform standard \algname{GD}. Building on this, \citet{takezawa2024parameter} analyzed \algname{GD} with Polyak stepsizes and derived $\cO\left(\max\left\{\nicefrac{L_0R_0^2}{\varepsilon}, \sqrt{\nicefrac{R_0^4LL_1^2}{\varepsilon}}\right\}\right)$ complexity bound. \citet{li2023convex} considered \algname{GD} and Nesterov's accelerated gradient \citep{nesterov1983method} under the broad class of functions satisfying the so-called $(r,\ell)$-smoothness and derive $\cO\left(\nicefrac{\ell R_0^2}{\varepsilon}\right)$ and $\cO\left(\sqrt{\nicefrac{\ell R_0^2}{\varepsilon}}\right)$ complexities respectively, where $\ell \eqdef L_0 + L_1 G$ and $G$ is dependent on smoothness parameters $(L_0, L_1)$, initial gradient norm, and functional suboptimality. However, through the constants $L$ and $G$, the bounds from \citet{koloskova2023revisiting, takezawa2024parameter, li2023convex} include exponentially large factors of $L_1 R_0$, a significant drawback addressed by the more recent results of \citet{gorbunov2024methods, vankov2024optimizing, lobanov2024linear}, which currently provide the tightest known bounds for deterministic convex $(L_0,L_1)$-smooth problems. Additionally, \citet{tyurin2024toward} present a unified analysis of \algname{GD} (with specific stepsizes) for both convex and non-convex problems under a more general $\ell(\norm{\nabla f(x)})$-smoothness condition, and \citet{yu2025mirror} study Mirror Descent and its variants under a version of $(r,\ell)$-smoothness \citep{li2023convex}, adapted to non-Euclidean norms.

Most of the works discussed above also present the convergence results for the stochastic methods \citep{zhang2019gradient, zhang2020adaptive, zhao2021convergence, chen2023generalized, crawshaw2022robustness, faw2023beyond, wang2022provable, wang2023convergence, li2024convergence, hubler2024parameter, gorbunov2024methods, yu2025mirror}. In addition, \citet{yang2024independently} propose and analyze a variant of \algname{Normalized SGD} with independent normalization. \citet{yu2025convergence} establish new convergence results for an accelerated version of \algname{SGD} with both constant and adaptive stepsizes under $(L_0,L_1)$-smoothness and relaxed affine variance assumptions. Furthermore, \citet{tovmasyan2025revisiting} introduce a generalized smoothness condition called $\psi$-smoothness and derive new convergence bounds for the Stochastic Proximal Point Method \citep{bertsekas2011incremental} under this framework. However, these papers do not address the heavy-tailed noise settings, and only \citet{faw2023beyond, wang2023convergence, li2024convergence, yu2025convergence} provide high-probability convergence guarantees. However, the bounds from \citet{faw2023beyond, wang2023convergence} have inverse-power dependencies on $\delta$, while the results of \citet{li2024convergence,yu2025convergence} rely on a sub-Gaussian noise assumption \eqref{eq:sub_Gaussian}\footnote{\citet{yu2025convergence} use a more general version of \eqref{eq:sub_Gaussian} with $\sigma^2 = A(f(x) - f(x^*)) + B\norm{\nabla f(x)} + C$.}. 

\paragraph{High-probability convergence under the light-tailed noise.} High-probability convergence guarantees have long been a critical component in the analysis of stochastic first-order methods, particularly when the noise in the stochastic gradients is light-tailed. In these settings, methods like \algname{SGD} and its variants can achieve convergence rates with the polylogarithmic dependence on the failure probability $\delta$. Under the sub-Gaussian noise assumption, this behavior has been rigorously established for \algname{SGD} \citep{nemirovski2009robust, harvey2019simple}, its accelerated counterparts \citep{ghadimi2012optimal, dvurechensky2016stochastic}, and adaptive methods like \algname{AdaGrad} \citep{li2020high, liu2023high}. Recent extensions to even broader classes of noise, such as sub-Weibull distributions, have further expanded this theoretical framework \citep{madden2024high}.

The most closely related work to ours is that of \citet{gaash2025convergence}, who derive high-probability convergence rates with polylogarithmic dependence on $\delta$ for convex $(L_0,L_1)$-smooth optimization under the assumption of sub-Gaussian noise in the stochastic gradients. Their approach involves a variant of \algname{Clip-SGD} that uses two independent stochastic gradients -- one for the update direction and another for the clipping multiplier. While this technique effectively avoids the exponentially large factors of $L_1 R_0$, its performance in the presence of heavy-tailed noise remains unclear.

\begin{algorithm}[t]
   \caption{\algname{Clip-SGD}} \label{alg:clip-sgd}
\begin{algorithmic}[1]
    \STATE {\bfseries Input:} Starting point $x_0$, level of clipping $\lambda$, learning rate $\gamma$
    \FOR{$k = 0, \ldots, K-1$}
    \STATE Sample $\nabla f(x_k, \xi_k)$
    \STATE $x_{k+1} = x_k - \gamma\clip(\nabla f(x_k, \xi_k), \lambda)$
    \ENDFOR 
\end{algorithmic}
\end{algorithm}

\paragraph{High-probability convergence under the heavy-tailed noise.} Gradient clipping is one of the most popular approaches to deal with the heavy-tailed noise in the literature on the high-probability convergence. Early work in this direction includes the truncated Stochastic Mirror Descent method proposed by \citet{nazin2019algorithms}, which established high-probability complexity bounds for convex and strongly convex problems under the bounded variance assumption (Assumption~\ref{asm: stochastic} with $\alpha = 2$). Building on this foundation, \citet{gorbunov2020stochastic} provided the first comprehensive high-probability bounds for \algname{Clip-SGD} (Algorithm~\ref{alg:clip-sgd}) and introduced an accelerated variant using the Stochastic Similar Triangles Method (\algname{SSTM}) \citep{gasnikov2016universal}. Subsequent work extended these results to broader problem classes, including non-smooth optimization \citep{gorbunov2024high_non_smooth, parletta2024high}, unconstrained variational inequalities \citep{gorbunov2022clipped}, and problems satisfying Assumption~\ref{asm: stochastic} with $\alpha < 2$ \citep{cutkosky2021high, sadiev2023high, nguyen2023improved, gorbunov2024high}. Adaptive variants have also been developed: \citet{li2023high} analyzed \algname{Clip-AdaGrad} with scalar stepsizes, while \citet{chezhegov2024gradient} obtained similar bounds for both scalar and coordinate-wise versions of \algname{Clip-AdaGrad} and \algname{Clip-Adam}. In the zeroth-order setting, \citet{kornilov2023accelerated} proposed a clipped variant of \algname{SSTM}.

Beyond gradient clipping, several alternative strategies for achieving high-probability convergence have been proposed. These include robust distance estimation with inexact proximal point methods \citep{davis2021low}, gradient normalization \citep{cutkosky2021high, hubler2024gradient}, and sign-based methods \citep{kornilov2025sign}. Notably, some of these approaches, such as those proposed by \citet{hubler2024gradient} and \citet{kornilov2025sign}, do not require prior knowledge of the tail parameter $\alpha$, albeit at the cost of sub-optimal convergence rates. For symmetric distributions, recent work has provided high-probability guarantees for non-linear transformations like standard clipping, coordinate-wise clipping, and normalization \citep{armacki2023high, armacki2024large}, while \citet{puchkin2024breaking} has explored median-based clipping and its extensions to structured non-symmetric problems.

Despite these advancements, existing high-probability convergence results for the $(L_0,L_1)$-smooth case (with the heavy-tailed noise) still suffer from the presence of exponentially large factors involving $L_1R_0$ in their bounds.

\section{Main Result}

In this section, we provide our main convergence result for \algname{Clip-SGD} method (Algorithm~\ref{alg:clip-sgd}). The next theorem provides new high-probability convergence rates for \algname{Clip-SGD}.
\begin{theorem}
\label{thm: main-result}
    Suppose that Assumptions \ref{asm: convexity}, \ref{asm: smoothness} and \ref{asm: stochastic} hold. Then, after $K$ iterations of \algname{Clip-SGD} (Algorithm~\ref{alg:clip-sgd}) with
    \begin{align*}
        \lambda = \max\left\{2L_0\min\left\{4R_0, \frac{1}{L_1}\right\}, 9^\frac{1}{\alpha}\sigma K^{\frac{1}{\alpha}} \left(\ln\left(\frac{4K}{\delta}\right)\right)^{-\frac{1}{\alpha}}\right\},
    \end{align*}
    \begin{align*}
        \gamma = \frac{1}{160 \lambda\ln\left(\frac{4K}{\delta}\right)} \min\left\{4R_0, \frac{1}{L_1}\right\}, 
    \end{align*}
    we have:
    \begin{itemize}
        \item If $4R_0 \leq \frac{1}{L_1}$, then
        \begin{align*}
            f\left(\frac{1}{K}\sum\limits_{k=0}^{K-1} x_k\right) - f^* = \tilde{\mathcal{O}}\left(\frac{L_0R_0^2}{K}, \frac{R_0\sigma}{K^{\nicefrac{(\alpha-1)}{\alpha}}}\right)
        \end{align*}
        with probability at least $1 - \delta$.
        \item If $4R_0 \geq \frac{1}{L_1}$ and $K = \Omega\left(\frac{(L_1R_0)^{2+\alpha}\ln^2\left(\frac{K}{\delta}\right)}{\delta}\right)$
        \begin{align*}
            \min_{k = 0, \ldots, K-1}(f(x_k) - f^*) = \mathcal{\Tilde{O}}\left(\max\left\{\frac{L_0R_0^2}{K}, \frac{L_1R_0^2\sigma}{K^{\nicefrac{(\alpha-1)}{\alpha}}}\right\}\right)
        \end{align*}
    holds with probability at least $1 - \delta$.
    \end{itemize}
\end{theorem}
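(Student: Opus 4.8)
The plan is to adapt the now-standard high-probability template for clipped methods (cf.\ \citet{gorbunov2020stochastic,sadiev2023high}), but with the confinement ball chosen according to $\min\{4R_0,\tfrac1{L_1}\}$ rather than $R_0$, so that the factor $\exp(L_1R_0)$ never enters. Fix a minimizer $x^*$, write $\hat g_k \eqdef \clip(\nabla f(x_k,\xi_k),\lambda)$, $r_k \eqdef \|x_k-x^*\|$, and decompose $\hat g_k - \nabla f(x_k) = \theta_k^b + \theta_k^u$ with bias $\theta_k^b = \E_{\xi_k}[\hat g_k]-\nabla f(x_k)$ and zero-mean part $\theta_k^u = \hat g_k - \E_{\xi_k}[\hat g_k]$. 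The one-step identity $r_{k+1}^2 = r_k^2 - 2\gamma\langle\hat g_k,x_k-x^*\rangle + \gamma^2\|\hat g_k\|^2$ combined with convexity yields
\begin{align*}
 r_{k+1}^2 \le r_k^2 - 2\gamma\bigl(f(x_k)-f^*\bigr) + 2\gamma r_k\|\theta_k^b\| - 2\gamma\langle\theta_k^u,x_k-x^*\rangle + \gamma^2\|\hat g_k\|^2 .
\end{align*}
The crucial structural input is \cref{prop: exponential} applied at $x^*$ (where $\nabla f(x^*)=0$): $\|\nabla f(x)\|\le L_0 e^{L_1\|x-x^*\|}\|x-x^*\|$, so on a ball $B_\rho(x^*)$ with $\rho\asymp\min\{4R_0,\tfrac1{L_1}\}$ the chosen $\lambda$ guarantees $\|\nabla f(x_k)\|\le\lambda/2$. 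On this event the standard clipping estimates hold: $\|\hat g_k\|\le\lambda$, $\|\theta_k^u\|\le2\lambda$, $\|\theta_k^b\|\lesssim\sigma^\alpha/\lambda^{\alpha-1}$, $\E_{\xi_k}\|\theta_k^u\|^2\lesssim\lambda^{2-\alpha}\sigma^\alpha$; and \cref{prop: smooth-equiv} lets one bound $\gamma^2\|\nabla f(x_k)\|^2\lesssim\gamma^2(L_0+L_1\lambda)(f(x_k)-f^*)$, which is absorbed into the $-2\gamma(f(x_k)-f^*)$ term because the stated $\gamma$ satisfies $\gamma\lesssim 1/(L_0+L_1\lambda)$.

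The heart of the argument is an induction over $k$ on the event $E_k=\{r_0,\dots,r_k\le\rho\}$, showing $\PP(E_k)\ge 1-k\delta/K$. Summing the recursion on $E_{k-1}$ and discarding the (now nonpositive) $-2\gamma\sum(f(x_j)-f^*)$ term, it remains to show that $R_0^2$ together with the accumulated bias term $2\gamma\rho\sum_{j<k}\|\theta_j^b\|\lesssim\gamma\rho K\sigma^\alpha/\lambda^{\alpha-1}$, the accumulated variance term $\gamma^2\sum_{j<k}\|\theta_j^u\|^2$, and the martingale term $-2\gamma\sum_{j<k}\langle\theta_j^u,x_j-x^*\rangle$ stay below $\rho^2$. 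The last two are controlled with high probability by Bernstein/Freedman inequalities for martingale differences, using $\|\theta_j^u\|\le2\lambda$, $|\langle\theta_j^u,x_j-x^*\rangle|\le2\lambda\rho$ on $E_{k-1}$, $\sum\E_{\xi_j}\langle\theta_j^u,x_j-x^*\rangle^2\lesssim\rho^2K\lambda^{2-\alpha}\sigma^\alpha$, and $\sum\E_{\xi_j}\|\theta_j^u\|^2\lesssim K\lambda^{2-\alpha}\sigma^\alpha$; the increments are made formally bounded by the usual device of replacing the random variables by ones that agree with them on $E_{k-1}$ and vanish off it. This produces deviations of order $\rho\sqrt{K\lambda^{2-\alpha}\sigma^\alpha\ln(K/\delta)}+\lambda\rho\ln(K/\delta)$ and $K\lambda^{2-\alpha}\sigma^\alpha+\lambda^2\ln(K/\delta)$, and requiring each — times the appropriate power of $\gamma$ — to be at most a small constant multiple of $\rho^2$ is exactly what forces $\lambda\asymp\max\{L_0\rho,\ \sigma K^{1/\alpha}\ln^{-1/\alpha}(4K/\delta)\}$ and $\gamma\asymp\rho/(\lambda\ln(4K/\delta))$, matching the theorem. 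I expect this simultaneous balancing of the bias term and both Bernstein terms, without losing the polylogarithmic dependence on $\delta$, to be the main bookkeeping obstacle.

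Once $E_K$ is established with probability $\ge1-\delta$, I re-sum the recursion keeping the $-2\gamma(f(x_j)-f^*)$ term, obtaining $2\gamma\sum_{j<K}(f(x_j)-f^*)\le R_0^2 + (\text{the same accumulated terms, now provably bounded})$, and conclude by Jensen that $f\bigl(\tfrac1K\sum_{j<K}x_j\bigr)-f^*\le\tfrac{R_0^2+O(\cdot)}{2\gamma K}$. In the case $4R_0\le\tfrac1{L_1}$, where $\rho\asymp R_0$, substituting $\gamma$ and the two possible values of $\lambda$ gives $\tilde{\cO}(L_0R_0^2/K)$ when the first branch of $\lambda$ is active and $\tilde{\cO}(R_0\sigma/K^{(\alpha-1)/\alpha})$ when the second is — i.e.\ the claimed maximum. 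In the complementary case $4R_0\ge\tfrac1{L_1}$, confining every iterate to within $1/L_1$ of $x^*$ is impossible without reintroducing $\exp(L_1R_0)$; instead one only tracks that the iterates never leave $B_{cR_0}(x^*)$ and argues, on the sub-iterations where $\|\nabla f(x_k)\|$ is still large, that a definite decrease in $r_k^2$ occurs (clipped steps behaving like normalized descent), so that after $K=\tilde\Omega\bigl((L_1R_0)^{2+\alpha}/\delta\bigr)$ steps some iterate must have reached the regime where $f(x_k)-f^*$ is already of the target order — which is precisely why the guarantee there is stated for $\min_k(f(x_k)-f^*)$ rather than the average, and why that $K$ threshold is exactly what renders the bias and deviation contributions lower-order in this phase.
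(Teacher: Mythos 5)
Your treatment of the first regime ($4R_0\le\tfrac1{L_1}$) is essentially the paper's argument: confine the iterates to a ball of radius $\cO(R_0)$, use \cref{prop: exponential} at $x^*$ to force $\norm{\nabla f(x_k)}\le 4L_0R_0\le\min\{\tfrac{L_0}{L_1},\tfrac{\lambda}{2}\}$, and then run the Bernstein-based induction of \citet{sadiev2023high} on the bias/martingale/variance terms. That part is correct and the parameter balancing you describe is the right one.

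The gap is in the second regime, which is where the actual content of the theorem lies. Your sketch says that on iterations with $\norm{\nabla f(x_k)}>\tfrac{\lambda}{2}$ the clipped step behaves like normalized descent and produces "a definite decrease in $r_k^2$," and that the threshold on $K$ "renders the bias and deviation contributions lower-order." But on those iterations the error $\hat{\theta}_k=g_k-\clip(\nabla f(x_k),\nicefrac{\lambda}{2})$ is \emph{not} a martingale difference with controllable bias: \cref{lem: clip-effect} requires $\norm{\nabla f(x_k)}\le\tfrac{\lambda}{2}$, so neither the $\cO(\sigma^\alpha/\lambda^{\alpha-1})$ bias bound nor the second-moment bound is available, and the cross term $-2\gamma\ev{\hat{\theta}_k,x_k-x^*}$ can be as large as $4\gamma\lambda R_0$, which dwarfs the guaranteed decrease $\tfrac{\gamma\lambda}{16L_1}$ precisely because $4R_0\ge\tfrac1{L_1}$. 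Bernstein cannot rescue this. The paper's mechanism, which your proposal does not supply, is: (i) the telescoped descent inequality itself forces $|T_3|\le C_1=\cO\bigl((L_1R_0)^2\ln(\tfrac{4K}{\delta})\bigr)$ on the good event, so these dangerous iterations are rare; (ii) on each such iteration one applies plain Markov's inequality to $\norm{\xi_k}$ with threshold $B=\tfrac{\lambda}{128L_1R_0}$, incurring a failure probability $\cO\bigl((L_1R_0\sigma/\lambda)^\alpha\bigr)$ per occurrence with \emph{no} polylogarithmic improvement; (iii) since this is paid at most $C_1$ times, the total cost is $\le\delta$ exactly when $K=\Omega\bigl(\tfrac{(L_1R_0)^{2+\alpha}\ln^2(K/\delta)}{\delta}\bigr)$ --- this is where the inverse-power dependence on $\delta$ comes from, and it cannot be derived from a "deviation contributions are lower-order" balancing of the usual kind; and (iv) carrying this through the induction requires replacing $\PP\{E_k\}\ge1-\tfrac{k\delta}{K}$ by the weaker hypothesis $\PP\{E_k\}\ge1-\tfrac{k\delta}{K}-\sum_{r}\min\{r,C_1\}\delta_0\PP\{|T_3(k)|=r\}$, together with a conditioning argument over whether $k-1\in T_3(k)$ and whether $|T_3(k-1)|$ has already hit $C_1$. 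Finally, the $\min_k$ conclusion is not obtained by "some iterate must have reached the target regime" but by summing $\gamma(f(x_k)-f^*)$ over $T_1\cup T_2$ only, dividing by $K-|T_3|$, and checking that the resulting bound is monotone in $|T_3|$. Without these steps the second bullet of the theorem is not proved.
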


\begin{proof}[Proof sketch]
    The proof begins with the establishment of a descent lemma (Lemma~\ref{lem: start-uniform}, Appendix~\ref{appendix:proof}), formulated in a case-based manner to account for the various possible relationships between $\norm{\nabla f(x_k)}$, the clipping threshold $\lambda$, and the ratio $\frac{L_0}{L_1}$, in line with existing analyses under $(L_0,L_1)$-smoothness \citep{koloskova2023revisiting, takezawa2024parameter, gorbunov2024methods}. Following the approach of \citet{sadiev2023high}, we define a sequence of events $E_k$, which imply the main result for $k = K$. We then use an inductive argument to derive sufficiently strong lower bounds on the probabilities of these events, proving by induction that $\PP\{E_k\} \geq 1 - \frac{k\delta}{K}$, which yields the desired bound for $k = K$. However, our proof introduces an additional layer of complexity by distinguishing two separate cases based on the relationship between the initial distance to the optimum, $R_0$, and $\frac{1}{L_1}$. 

In the first case ($4R_0 \leq \frac{1}{L_1}$), we recover convergence bounds similar to those in \citet{sadiev2023high}. This is expected, as we show that with high probability, the iterates remain within the ball $B_{\sqrt{2}R_0}(x^*)$. Consequently, for any $x, y$ within this set, the terms $L_1\norm{\nabla f(x)}$ and $\exp(L_1\norm{y-x})$ from Proposition~\ref{prop: exponential} can be bounded by $\cO(L_0)$ and $\cO(1)$, respectively, implying that the objective function is $L$-smooth on $B_{\sqrt{2}R_0}(x^)$ with $L = \cO(L_0)$.

In contrast, in the second case ($4R_0 \geq \frac{1}{L_1}$), we must additionally control the effect of rare, large gradient norms that exceed the clipping threshold. Specifically, for any $0 < T \leq K$, we show that $E_{T-1}$ implies
\begin{align}
         \sum_{l \in {T_1(t) \cup T_2(t)}} \gamma(f(x_l) - f^*) &\leq \sqn{x_0 - x^*} - \sqn{x_t - x^*}\label{eq:very_easy_terms}\\
         &- \sum_{l \in T_1(t) \cup T_2(t)} 2\gamma\ev{\theta_l, x_l - x^*}  + \sum_{l \in T_1(t) \cup T_2(t)}2\gamma^2\sqn{\theta_l} \label{eq:easy_terms}\\
         & - \sum_{l \in T_3(t)}2\gamma\ev{\hat{\theta}_l, x_l - x^*} - \frac{\gamma\lambda|T_3(t)|}{16L_1} \label{eq:main_term}
\end{align}
holds for $t = 1, \ldots, T$, where $\theta_l \eqdef \clip(\nabla f(x_l, \xi_l), \lambda) - \nabla f(x_l)$, $\hat{\theta}_l \eqdef \clip(\nabla f(x_l, \xi_l), \lambda) - \clip(\nabla f(x_l), \nicefrac{\lambda}{2})$, and
\begin{align*}
    T_1(t) &\eqdef \left\{k \in 0, \ldots, t-1 \mid \norm{\nabla f(x_k)} \leq \frac{L_0}{L_1}\right\},\\
    T_2(t) &\eqdef \left\{k \in 0, \ldots, t-1 \mid \frac{\lambda}{2} \geq \norm{\nabla f(x_k)} > \frac{L_0}{L_1}\right\},\\
    T_3(t) &\eqdef \left\{k \in 0, \ldots, t-1 \mid \norm{\nabla f(x_k)} > \frac{\lambda}{2}\right\}.
\end{align*}
As in the first case, we bound the contributions from \eqref{eq:very_easy_terms} and \eqref{eq:easy_terms} by $\cO(R_0)$ with high probability using Bernstein’s inequality, along with assumptions on $\gamma$ and $\lambda$. However, the key term in \eqref{eq:main_term} is bounded using a different argument. Specifically, we show that the inequality $-2\gamma \langle \hat{\theta}_l, x_l - x^* \rangle \leq \frac{\gamma \lambda}{32L_1}$ follows from the condition $\norm{\xi_l} \leq B \eqdef \frac{\lambda}{128L_1R_0}$ for $l \in T_3(t)$, where we slightly abuse notation by defining $\xi_l \eqdef \nabla f(x_l, \xi_l) - \nabla f(x_l)$. Furthermore, the construction of $E_{T-1}$ guarantees that $|T_3(T-1)| \leq C_1 \eqdef 10240(L_1R_0)^2\ln\left(\frac{4K}{\delta}\right)$, since $E_{T-1}$ also implies $0 \leq 2R_0^2 - \frac{\gamma \lambda|T_3(T-1)|}{32L_1}$. To complete the inductive step, we apply Markov’s inequality to estimate $\PP\{\norm{\xi_{k-1}} \leq B\}$ under the conditions $k-1 \in T_3(k)$ and $\abs{T_3(k-1)} \leq C_1 - 1$. This step leads to the requirement $K = \Omega\left(\frac{(L_1R_0)^{2+\alpha}\ln^2\left(\frac{K}{\delta}\right)}{\delta}\right)$, which arises from applying Markov’s inequality up to $C_1$ times.

Finally, we emphasize that in the second case ($4R_0 \geq \frac{1}{L_1}$), we prove by induction that 
\begin{equation*}
    \PP\{E_k\} \geq 1 - \frac{k\delta}{K} - \sum\limits_{r=0}^{k} \min\left\{\frac{r}{C_1}, 1\right\}\delta \PP\{|T_3(k)| = r\},
\end{equation*}
which significantly differs from the induction assumptions used in previous works \citep{gorbunov2020stochastic, sadiev2023high, gorbunov2024high}. For complete technical details, we refer the reader to Appendix~\ref{appendix:proof}.
\end{proof}

\section{Discussion of the Result}

In this section, we discuss our main convergence results, highlighting their significance in the context of existing work, including a detailed comparison with prior analyses, and addressing the challenges associated with heavy-tailed noise and generalized smoothness.

\subsection{Comparison with \cite{gaash2025convergence}}

The closest related work to ours is the recent study by \citet{gaash2025convergence}, which also analyzes the high-probability convergence of \algname{Clip-SGD} under generalized smoothness conditions. Prior to conducting the comparison, we introduce the algorithm (see \cref{alg:clip-sgd-double}) under consideration in \citep{gaash2025convergence}. For simplicity, we omit the projection operator on some set $\mathcal{X}$ from the original version since it is unnecessary for the convergence guarantees of \cref{alg:clip-sgd-double}.

\begin{algorithm}[ht]
   \caption{\algname{Clip-SGD} with double sampling \citep{gaash2025convergence}} \label{alg:clip-sgd-double}
\begin{algorithmic}[1]
    \STATE {\bfseries Input:} Start point $x_0$, level of clipping $\lambda$, learning rate $\gamma$
    \FOR{$k = 0, \ldots, K-1$}
    \STATE Sample $\nabla f(x_k, \xi_k^c), \nabla f(x_k, \xi_k)$ independently
    \STATE $x_{k+1} = x_k - \gamma\min\left\{1, \frac{\lambda}{\norm{\nabla f(x_k, \xi_k^c)}}\right\}\nabla f(x_k, \xi_k)$
    \ENDFOR 
\end{algorithmic}
\end{algorithm}

\paragraph{Light-tailed noise.} The analysis from \citet{gaash2025convergence} is restricted to the case of sub-Gaussian noise, which is substantially lighter-tailed than the noise distributions considered in our work \citep{zhang2020adaptive}. This assumption simplifies the convergence analysis, as sub-Gaussian noise is inherently more amenable to standard concentration inequalities. In contrast, we focus on the more challenging setting of heavy-tailed noise, characterized by only a bounded central $\alpha$-th moment, which introduces significant technical difficulties in establishing high-probability guarantees.

\paragraph{Role of clipping.} Furthermore, under the simpler $L$-smoothness assumption \eqref{eq:L_smoothness}, the need for clipping in the light-tailed noise setting largely disappears. In this case, the inherent concentration of sub-Gaussian noise is often sufficient to control the gradient norms, making clipping unnecessary. However, when the generalized $(L_0,L_1)$-smoothness assumption is introduced, clipping becomes essential even with light-tailed noise, as it restricts the range of gradient norms, ensuring the validity of the generalized smoothness assumption. In contrast, for heavy-tailed noise, the clipping threshold $\lambda$ must address two competing objectives: (i) it must remain constant to effectively control the gradient norms for the application of the $(L_0, L_1)$-smoothness condition, and (ii) it must scale with the number of iterations to mitigate the impact of rare, extreme gradients. Our analysis demonstrates that standard clipping can simultaneously address both of these challenges, a property that is unnecessary in the purely light-tailed regime where gradient norms are naturally more controlled.

\paragraph{Practicality.} Finally, the algorithm analyzed in \citet{gaash2025convergence} employs a double-sampling strategy, where the gradient direction and the clipping threshold are computed using two independent samples. This approach, while providing strong theoretical guarantees, can significantly increase the computational cost and memory requirements, potentially limiting its practical applicability in large-scale machine learning problems. In contrast, our analysis considers the standard, single-sample variant of \algname{Clip-SGD}, demonstrating that strong convergence guarantees can be obtained without requiring such algorithmic modifications. This distinction is critical, as it reflects a more realistic scenario for practical applications, where computational efficiency is a key concern.

\subsection{Upper Bounds}
Our main result establishes the following upper bound on the convergence rate:
\begin{align*}
    \mathcal{\Tilde{O}}\left(\max\left\{\frac{L_0R_0^2}{K}, \frac{\max\{1, L_1R_0\}R_0\sigma}{K^{\nicefrac{(\alpha-1)}{\alpha}}}\right\}\right) \text{ with } K = \Omega\left(\frac{(L_1R_0)^{2+\alpha}\ln^2\left(\frac{K}{\delta}\right)}{\delta}\right).
\end{align*}
This result recovers several known special cases from the literature. When $L_1 = 0$, the bound simplifies to the convergence rate for $L$-smooth settings previously established in \citet{sadiev2023high}, which corresponds to the classical smooth optimization framework. On the other hand, if the noise level is zero (i.e., $\sigma = 0$), our bound reduces to the deterministic convergence rates derived in the context of \algname{GD} with smoothed gradient clipping by \citet{gorbunov2024methods}.

For comparison, the recent work by \citet{gaash2025convergence} obtained an upper bound of the form
\begin{align*}
    \mathcal{\Tilde{O}}\left(\max\left\{\frac{L_0R_0^2}{K}, \frac{R_0\sigma}{\sqrt{K}}\right\}\right) \text{ with } K = \Omega\left(\ln\left(\frac{K}{\delta}\right)(L_1R_0)^{2}\right).
\end{align*}
While this bound shares a similar structure to ours, their lower bound on $K$ is $(L_1R_0)^\alpha$ times smaller. This difference arises from the different ways in which gradient clipping manages extreme gradient magnitudes, as discussed in the paragraph on the role of clipping from the previous subsection. Furthermore, the lower bound on $K$ in \citep{gaash2025convergence} does not explicitly include a $\nicefrac{1}{\delta}$ factor, due to their reliance on sub-Gaussian noise assumptions, which provide inherently stronger tail control (see equation \eqref{eq:sub_Gaussian}). In contrast, our analysis, which handles the more general heavy-tailed noise case, requires the use of Markov's inequality to control the probability of rare, high-magnitude gradient events (when $\norm{\nabla f(x_k)} \geq \frac{\lambda}{2} \geq \frac{L_0}{L_1}$), leading to a stricter dependence on $\delta$.

Nevertheless, the term proportional to $\nicefrac{1}{\delta}$ in our result has only a polylogarithmic dependence on  $K$. This means that our result ensures that $\min_{k = 0, \ldots, K-1}(f(x_k) - f^*) \leq \varepsilon$ holds with probability at least $1-\delta$ after
\begin{equation*}
    K = \tilde\cO\left(\max\left\{\frac{L_0R_0^2}{\varepsilon}, \left(\frac{\max\{1, L_1R_0\}R_0\sigma}{\varepsilon}\right)^{\frac{\alpha}{\alpha-1}}, \frac{(L_1R_0)^{2+\alpha}}{\delta}\right\}\right) \text{ iterations}.
\end{equation*}
Notably, the inverse-power dependence on $\delta$ only appears in the term that is independent of $\varepsilon$ (up to the logarithmic factors). This implies that, unless $\delta$ is much smaller than $\varepsilon$, this term is not the dominant one, while the second term is.

\section{Conclusion} In this paper, we presented the first high-probability convergence analysis for \algname{Clip-SGD} under the joint assumptions of heavy-tailed noise and $(L_0,L_1)$-smoothness. Our results establish that for convex $(L_0, L_1)$-smooth optimization problems with stochastic gradients having bounded central $\alpha$-th moment with $\alpha \in (1,2]$, \algname{Clip-SGD} with specifically selected clipping level achieves a high-probability convergence rate of 
\begin{equation*}
    \tilde{\mathcal{O}}\left(\max\left\{\frac{L_0R_0^2}{K}, \frac{\max\{1, L_1R_0\}R_0\sigma}{K^{{(\alpha-1)}/{\alpha}}}\right\}\right) \text{ for } K = \tilde{\Omega}\left(\frac{(L_1R_0)^{2+\alpha}}{\delta}\right).
\end{equation*}
Our approach successfully avoids the exponentially large factors of $L_1 R_0$.

While our work resolves a critical gap in the convergence theory of stochastic gradient methods under generalized smoothness and heavy-tailed noise, several important open questions remain. First, it would be interesting to investigate the optimality of the lower bound on $K$, i.e., its dependence on $\delta$.  Second, it would be valuable to extend these high-probability convergence results to the accelerated methods, such as the ones based on Nesterov's momentum, which are known to exhibit faster convergence under classical smoothness. Third, our analysis is limited to convex optimization, and extending these results to the non-convex case remains a significant challenge, especially under heavy-tailed noise. Fourth, understanding how these techniques can be adapted to handle more complex structures, such as variational inequalities and saddle-point problems, represents another promising direction for future research. Finally, the application of these methods in distributed and federated learning, where the gradient noise can vary significantly across nodes, is another important open problem, particularly in light of recent interest in scalable, decentralized optimization methods.

We hope that our results inspire further research in these directions and contribute to the broader understanding of stochastic optimization under realistic noise and smoothness assumptions.

\bibliographystyle{apalike}
\bibliography{refs}

\begin{thebibliography}{}

\bibitem[Armacki et~al., 2023]{armacki2023high}
Armacki, A., Sharma, P., Joshi, G., Bajovic, D., Jakovetic, D., and Kar, S. (2023).
\newblock High-probability convergence bounds for nonlinear stochastic gradient descent under heavy-tailed noise.
\newblock {\em arXiv preprint arXiv:2310.18784}.

\bibitem[Armacki et~al., 2024]{armacki2024large}
Armacki, A., Yu, S., Bajovic, D., Jakovetic, D., and Kar, S. (2024).
\newblock Large deviations and improved mean-squared error rates of nonlinear sgd: Heavy-tailed noise and power of symmetry.
\newblock {\em arXiv preprint arXiv:2410.15637}.

\bibitem[Bennett, 1962]{bennett1962probability}
Bennett, G. (1962).
\newblock Probability inequalities for the sum of independent random variables.
\newblock {\em Journal of the American Statistical Association}, 57(297):33--45.

\bibitem[Bertsekas, 2011]{bertsekas2011incremental}
Bertsekas, D.~P. (2011).
\newblock Incremental proximal methods for large scale convex optimization.
\newblock {\em Mathematical programming}, 129(2):163--195.

\bibitem[Bilel, 2024]{bilel2024complexities}
Bilel, B. (2024).
\newblock Complexities of armijo-like algorithms in deep learning context.
\newblock {\em arXiv preprint arXiv:2412.14637}.

\bibitem[Brown et~al., 2020]{brown2020language}
Brown, T., Mann, B., Ryder, N., Subbiah, M., Kaplan, J.~D., Dhariwal, P., Neelakantan, A., Shyam, P., Sastry, G., Askell, A., et~al. (2020).
\newblock Language models are few-shot learners.
\newblock {\em Advances in neural information processing systems}, 33:1877--1901.

\bibitem[Chen et~al., 2023]{chen2023generalized}
Chen, Z., Zhou, Y., Liang, Y., and Lu, Z. (2023).
\newblock Generalized-smooth nonconvex optimization is as efficient as smooth nonconvex optimization.
\newblock In {\em International Conference on Machine Learning}, pages 5396--5427. PMLR.

\bibitem[Chezhegov et~al., 2024]{chezhegov2024gradient}
Chezhegov, S., Klyukin, Y., Semenov, A., Beznosikov, A., Gasnikov, A., Horv{\'a}th, S., Tak{\'a}{\v{c}}, M., and Gorbunov, E. (2024).
\newblock Clipping improves adam-norm and adagrad-norm when the noise is heavy-tailed.
\newblock {\em arXiv preprint arXiv:2406.04443}.

\bibitem[Crawshaw et~al., 2022]{crawshaw2022robustness}
Crawshaw, M., Liu, M., Orabona, F., Zhang, W., and Zhuang, Z. (2022).
\newblock Robustness to unbounded smoothness of generalized signsgd.
\newblock {\em Advances in neural information processing systems}, 35:9955--9968.

\bibitem[Cutkosky and Mehta, 2021]{cutkosky2021high}
Cutkosky, A. and Mehta, H. (2021).
\newblock High-probability bounds for non-convex stochastic optimization with heavy tails.
\newblock {\em Advances in Neural Information Processing Systems}, 34:4883--4895.

\bibitem[Davis et~al., 2021]{davis2021low}
Davis, D., Drusvyatskiy, D., Xiao, L., and Zhang, J. (2021).
\newblock From low probability to high confidence in stochastic convex optimization.
\newblock {\em Journal of machine learning research}, 22(49):1--38.

\bibitem[Devlin et~al., 2019]{devlin2019bert}
Devlin, J., Chang, M.-W., Lee, K., and Toutanova, K. (2019).
\newblock Bert: Pre-training of deep bidirectional transformers for language understanding.
\newblock In {\em Proceedings of the 2019 Conference of the North American Chapter of the Association for Computational Linguistics: Human Language Technologies, Volume 1 (Long and Short Papers)}, pages 4171--4186.

\bibitem[Duchi et~al., 2011]{duchi2011adaptive}
Duchi, J., Hazan, E., and Singer, Y. (2011).
\newblock Adaptive subgradient methods for online learning and stochastic optimization.
\newblock {\em Journal of machine learning research}, 12(7).

\bibitem[Dvurechensky and Gasnikov, 2016]{dvurechensky2016stochastic}
Dvurechensky, P. and Gasnikov, A. (2016).
\newblock Stochastic intermediate gradient method for convex problems with stochastic inexact oracle.
\newblock {\em Journal of Optimization Theory and Applications}, 171:121--145.

\bibitem[Dzhaparidze and Van~Zanten, 2001]{dzhaparidze2001bernstein}
Dzhaparidze, K. and Van~Zanten, J. (2001).
\newblock On bernstein-type inequalities for martingales.
\newblock {\em Stochastic processes and their applications}, 93(1):109--117.

\bibitem[Faw et~al., 2023]{faw2023beyond}
Faw, M., Rout, L., Caramanis, C., and Shakkottai, S. (2023).
\newblock Beyond uniform smoothness: A stopped analysis of adaptive sgd.
\newblock In {\em The Thirty Sixth Annual Conference on Learning Theory}, pages 89--160. PMLR.

\bibitem[Fedus et~al., 2022]{fedus2022switch}
Fedus, W., Zoph, B., and Shazeer, N. (2022).
\newblock Switch transformers: Scaling to trillion parameter models with simple and efficient sparsity.
\newblock {\em Journal of Machine Learning Research}, 23(120):1--39.

\bibitem[Freedman et~al., 1975]{freedman1975tail}
Freedman, D.~A. et~al. (1975).
\newblock On tail probabilities for martingales.
\newblock {\em the Annals of Probability}, 3(1):100--118.

\bibitem[Gaash et~al., 2025]{gaash2025convergence}
Gaash, O., Levy, K.~Y., and Carmon, Y. (2025).
\newblock Convergence of clipped sgd on convex $(l\_0, l\_1) $-smooth functions.
\newblock {\em arXiv preprint arXiv:2502.16492}.

\bibitem[Gasnikov and Nesterov, 2016]{gasnikov2016universal}
Gasnikov, A. and Nesterov, Y. (2016).
\newblock Universal fast gradient method for stochastic composit optimization problems.
\newblock {\em arXiv preprint arXiv:1604.05275}.

\bibitem[Ghadimi and Lan, 2012]{ghadimi2012optimal}
Ghadimi, S. and Lan, G. (2012).
\newblock Optimal stochastic approximation algorithms for strongly convex stochastic composite optimization i: A generic algorithmic framework.
\newblock {\em SIAM Journal on Optimization}, 22(4):1469--1492.

\bibitem[Ghadimi and Lan, 2013]{ghadimi2013stochastic}
Ghadimi, S. and Lan, G. (2013).
\newblock Stochastic first-and zeroth-order methods for nonconvex stochastic programming.
\newblock {\em SIAM Journal on Optimization}, 23(4):2341--2368.

\bibitem[Goodfellow et~al., 2016]{goodfellow2016deep}
Goodfellow, I., Bengio, Y., and Courville, A. (2016).
\newblock {\em Deep learning}.
\newblock MIT press.

\bibitem[Gorbunov et~al., 2022]{gorbunov2022clipped}
Gorbunov, E., Danilova, M., Dobre, D., Dvurechenskii, P., Gasnikov, A., and Gidel, G. (2022).
\newblock Clipped stochastic methods for variational inequalities with heavy-tailed noise.
\newblock {\em Advances in Neural Information Processing Systems}, 35:31319--31332.

\bibitem[Gorbunov et~al., 2020]{gorbunov2020stochastic}
Gorbunov, E., Danilova, M., and Gasnikov, A. (2020).
\newblock Stochastic optimization with heavy-tailed noise via accelerated gradient clipping.
\newblock {\em Advances in Neural Information Processing Systems}, 33:15042--15053.

\bibitem[Gorbunov et~al., 2024a]{gorbunov2024high_non_smooth}
Gorbunov, E., Danilova, M., Shibaev, I., Dvurechensky, P., and Gasnikov, A. (2024a).
\newblock High-probability complexity bounds for non-smooth stochastic convex optimization with heavy-tailed noise.
\newblock {\em Journal of Optimization Theory and Applications}, pages 1--60.

\bibitem[Gorbunov et~al., 2024b]{gorbunov2024high}
Gorbunov, E., Sadiev, A., Danilova, M., Horv\'{a}th, S., Gidel, G., Dvurechensky, P., Gasnikov, A., and Richt\'{a}rik, P. (2024b).
\newblock High-probability convergence for composite and distributed stochastic minimization and variational inequalities with heavy-tailed noise.
\newblock In Salakhutdinov, R., Kolter, Z., Heller, K., Weller, A., Oliver, N., Scarlett, J., and Berkenkamp, F., editors, {\em Proceedings of the 41st International Conference on Machine Learning}, volume 235 of {\em Proceedings of Machine Learning Research}, pages 15951--16070. PMLR.

\bibitem[Gorbunov et~al., 2025]{gorbunov2024methods}
Gorbunov, E., Tupitsa, N., Choudhury, S., Aliev, A., Richt{\'a}rik, P., Horv{\'a}th, S., and Tak{\'a}{\v{c}}, M. (2025).
\newblock Methods for convex $({L}_0, {L}_1) $-smooth optimization: Clipping, acceleration, and adaptivity.
\newblock {\em International Conference on Learning Representations}.

\bibitem[Harvey et~al., 2019]{harvey2019simple}
Harvey, N.~J., Liaw, C., and Randhawa, S. (2019).
\newblock Simple and optimal high-probability bounds for strongly-convex stochastic gradient descent.
\newblock {\em arXiv preprint arXiv:1909.00843}.

\bibitem[H{\"u}bler et~al., 2024a]{hubler2024gradient}
H{\"u}bler, F., Fatkhullin, I., and He, N. (2024a).
\newblock From gradient clipping to normalization for heavy tailed sgd.
\newblock {\em arXiv preprint arXiv:2410.13849}.

\bibitem[H{\"u}bler et~al., 2024b]{hubler2024parameter}
H{\"u}bler, F., Yang, J., Li, X., and He, N. (2024b).
\newblock Parameter-agnostic optimization under relaxed smoothness.
\newblock In {\em International Conference on Artificial Intelligence and Statistics}, pages 4861--4869. PMLR.

\bibitem[Khirirat et~al., 2024]{khirirat2024error}
Khirirat, S., Sadiev, A., Riabinin, A., Gorbunov, E., and Richt{\'a}rik, P. (2024).
\newblock Error feedback under $({L}_0, {L}_1)$-smoothness: Normalization and momentum.
\newblock {\em arXiv preprint arXiv:2410.16871}.

\bibitem[Kingma and Ba, 2014]{kingma2014adam}
Kingma, D.~P. and Ba, J. (2014).
\newblock Adam: A method for stochastic optimization.
\newblock {\em arXiv preprint arXiv:1412.6980}.

\bibitem[Koloskova et~al., 2023]{koloskova2023revisiting}
Koloskova, A., Hendrikx, H., and Stich, S.~U. (2023).
\newblock Revisiting gradient clipping: Stochastic bias and tight convergence guarantees.
\newblock In {\em International Conference on Machine Learning}, pages 17343--17363. PMLR.

\bibitem[Kornilov et~al., 2023]{kornilov2023accelerated}
Kornilov, N., Shamir, O., Lobanov, A., Dvinskikh, D., Gasnikov, A., Shibaev, I., Gorbunov, E., and Horv{\'a}th, S. (2023).
\newblock Accelerated zeroth-order method for non-smooth stochastic convex optimization problem with infinite variance.
\newblock {\em Advances in Neural Information Processing Systems}, 36:64083--64102.

\bibitem[Kornilov et~al., 2025]{kornilov2025sign}
Kornilov, N., Zmushko, P., Semenov, A., Gasnikov, A., and Beznosikov, A. (2025).
\newblock Sign operator for coping with heavy-tailed noise: High probability convergence bounds with extensions to distributed optimization and comparison oracle.
\newblock {\em arXiv preprint arXiv:2502.07923}.

\bibitem[Li et~al., 2023]{li2023convex}
Li, H., Qian, J., Tian, Y., Rakhlin, A., and Jadbabaie, A. (2023).
\newblock Convex and non-convex optimization under generalized smoothness.
\newblock {\em Advances in Neural Information Processing Systems}, 36:40238--40271.

\bibitem[Li et~al., 2024]{li2024convergence}
Li, H., Rakhlin, A., and Jadbabaie, A. (2024).
\newblock Convergence of adam under relaxed assumptions.
\newblock {\em Advances in Neural Information Processing Systems}, 36.

\bibitem[Li and Liu, 2023]{li2023high}
Li, S. and Liu, Y. (2023).
\newblock High probability analysis for non-convex stochastic optimization with clipping.
\newblock In {\em ECAI 2023}, pages 1406--1413. IOS Press.

\bibitem[Li and Orabona, 2020]{li2020high}
Li, X. and Orabona, F. (2020).
\newblock A high probability analysis of adaptive sgd with momentum.
\newblock {\em arXiv preprint arXiv:2007.14294}.

\bibitem[Liu et~al., 2023]{liu2023high}
Liu, Z., Nguyen, T.~D., Nguyen, T.~H., Ene, A., and Nguyen, H. (2023).
\newblock High probability convergence of stochastic gradient methods.
\newblock In {\em International Conference on Machine Learning}, pages 21884--21914. PMLR.

\bibitem[Lobanov et~al., 2024]{lobanov2024linear}
Lobanov, A., Gasnikov, A., Gorbunov, E., and Tak{\'a}{\v{c}}, M. (2024).
\newblock Linear convergence rate in convex setup is possible! gradient descent method variants under $({L}_0, {L}_1)$-smoothness.
\newblock {\em arXiv preprint arXiv:2412.17050}.

\bibitem[Madden et~al., 2024]{madden2024high}
Madden, L., Dall'Anese, E., and Becker, S. (2024).
\newblock High probability convergence bounds for non-convex stochastic gradient descent with sub-weibull noise.
\newblock {\em Journal of Machine Learning Research}, 25(241):1--36.

\bibitem[Nazin et~al., 2019]{nazin2019algorithms}
Nazin, A.~V., Nemirovsky, A.~S., Tsybakov, A.~B., and Juditsky, A.~B. (2019).
\newblock Algorithms of robust stochastic optimization based on mirror descent method.
\newblock {\em Automation and Remote Control}, 80:1607--1627.

\bibitem[Nemirovski et~al., 2009]{nemirovski2009robust}
Nemirovski, A.~S., Juditsky, A.~B., Lan, G., and Shapiro, A. (2009).
\newblock Robust stochastic approximation approach to stochastic programming.
\newblock {\em SIAM Journal on Optimization}, 19(4):1574--1609.

\bibitem[Nesterov, 1983]{nesterov1983method}
Nesterov, Y.~E. (1983).
\newblock A method for solving the convex programming problem with convergence rate {O}$(1/k^2)$.
\newblock In {\em Dokl. akad. nauk Sssr}, volume 269, pages 543--547.

\bibitem[Nguyen et~al., 2023]{nguyen2023improved}
Nguyen, T.~D., Nguyen, T.~H., Ene, A., and Nguyen, H. (2023).
\newblock Improved convergence in high probability of clipped gradient methods with heavy tailed noise.

\bibitem[Parletta et~al., 2024]{parletta2024high}
Parletta, D.~A., Paudice, A., Pontil, M., and Salzo, S. (2024).
\newblock High probability bounds for stochastic subgradient schemes with heavy tailed noise.
\newblock {\em SIAM Journal on Mathematics of Data Science}, 6(4):953--977.

\bibitem[Pascanu et~al., 2013]{pascanu2013difficulty}
Pascanu, R., Mikolov, T., and Bengio, Y. (2013).
\newblock On the difficulty of training recurrent neural networks.
\newblock In {\em International conference on machine learning}, pages 1310--1318. Pmlr.

\bibitem[Puchkin et~al., 2024]{puchkin2024breaking}
Puchkin, N., Gorbunov, E., Kutuzov, N., and Gasnikov, A. (2024).
\newblock Breaking the heavy-tailed noise barrier in stochastic optimization problems.
\newblock In {\em International Conference on Artificial Intelligence and Statistics}, pages 856--864. PMLR.

\bibitem[Robbins and Monro, 1951]{robbins1951stochastic}
Robbins, H. and Monro, S. (1951).
\newblock A stochastic approximation method.
\newblock {\em The annals of mathematical statistics}, pages 400--407.

\bibitem[Sadiev et~al., 2023]{sadiev2023high}
Sadiev, A., Danilova, M., Gorbunov, E., Horv\'{a}th, S., Gidel, G., Dvurechensky, P., Gasnikov, A., and Richt\'{a}rik, P. (2023).
\newblock High-probability bounds for stochastic optimization and variational inequalities: the case of unbounded variance.
\newblock In Krause, A., Brunskill, E., Cho, K., Engelhardt, B., Sabato, S., and Scarlett, J., editors, {\em Proceedings of the 40th International Conference on Machine Learning}, volume 202 of {\em Proceedings of Machine Learning Research}, pages 29563--29648. PMLR.

\bibitem[Shalev-Shwartz and Ben-David, 2014]{shalev2014understanding}
Shalev-Shwartz, S. and Ben-David, S. (2014).
\newblock {\em Understanding machine learning: From theory to algorithms}.
\newblock Cambridge university press.

\bibitem[Tak{\'a}{\v{c}} et~al., 2013]{takavc2013mini}
Tak{\'a}{\v{c}}, M., Bijral, A., Richt{\'a}rik, P., and Srebro, N. (2013).
\newblock Mini-batch primal and dual methods for svms.
\newblock In {\em In 30th International Conference on Machine Learning, ICML 2013}.

\bibitem[Takezawa et~al., 2024]{takezawa2024parameter}
Takezawa, Y., Bao, H., Sato, R., Niwa, K., and Yamada, M. (2024).
\newblock Parameter-free clipped gradient descent meets polyak.
\newblock {\em Advances in Neural Information Processing Systems}, 37:44575--44599.

\bibitem[Touvron et~al., 2023]{touvron2023llama}
Touvron, H., Lavril, T., Izacard, G., Martinet, X., Lachaux, M.-A., Lacroix, T., Rozi{\`e}re, B., Goyal, N., Hambro, E., Azhar, F., et~al. (2023).
\newblock Llama: Open and efficient foundation language models.
\newblock {\em arXiv preprint arXiv:2302.13971}.

\bibitem[Tovmasyan et~al., 2025]{tovmasyan2025revisiting}
Tovmasyan, Z., Malinovsky, G., Condat, L., and Richt{\'a}rik, P. (2025).
\newblock Revisiting stochastic proximal point methods: Generalized smoothness and similarity.
\newblock {\em arXiv preprint arXiv:2502.03401}.

\bibitem[Tyurin, 2024]{tyurin2024toward}
Tyurin, A. (2024).
\newblock Toward a unified theory of gradient descent under generalized smoothness.
\newblock {\em arXiv preprint arXiv:2412.11773}.

\bibitem[Vankov et~al., 2025]{vankov2024optimizing}
Vankov, D., Rodomanov, A., Nedich, A., Sankar, L., and Stich, S.~U. (2025).
\newblock Optimizing $({L}_0, {L}_1) $-smooth functions by gradient methods.
\newblock {\em International Conference on Learning Representations}.

\bibitem[Wang et~al., 2023]{wang2023convergence}
Wang, B., Zhang, H., Ma, Z., and Chen, W. (2023).
\newblock Convergence of adagrad for non-convex objectives: Simple proofs and relaxed assumptions.
\newblock In {\em The Thirty Sixth Annual Conference on Learning Theory}, pages 161--190. PMLR.

\bibitem[Wang et~al., 2022]{wang2022provable}
Wang, B., Zhang, Y., Zhang, H., Meng, Q., Ma, Z.-M., Liu, T.-Y., and Chen, W. (2022).
\newblock Provable adaptivity in adam.
\newblock {\em arXiv preprint arXiv:2208.09900}.

\bibitem[Yang et~al., 2024]{yang2024independently}
Yang, Y., Tripp, E., Sun, Y., Zou, S., and Zhou, Y. (2024).
\newblock Independently-normalized sgd for generalized-smooth nonconvex optimization.
\newblock {\em arXiv preprint arXiv:2410.14054}.

\bibitem[Yu et~al., 2025a]{yu2025convergence}
Yu, C., Hong, Y., and Lin, J. (2025a).
\newblock Convergence analysis of stochastic accelerated gradient methods for generalized smooth optimizations.
\newblock {\em arXiv preprint arXiv:2502.11125}.

\bibitem[Yu et~al., 2025b]{yu2025mirror}
Yu, D., Jiang, W., Wan, Y., and Zhang, L. (2025b).
\newblock Mirror descent under generalized smoothness.
\newblock {\em arXiv preprint arXiv:2502.00753}.

\bibitem[Zhang et~al., 2020a]{zhang2020improved}
Zhang, B., Jin, J., Fang, C., and Wang, L. (2020a).
\newblock Improved analysis of clipping algorithms for non-convex optimization.
\newblock In {\em Advances in Neural Information Processing Systems}.

\bibitem[Zhang et~al., 2020b]{zhang2019gradient}
Zhang, J., He, T., Sra, S., and Jadbabaie, A. (2020b).
\newblock Why gradient clipping accelerates training: A theoretical justification for adaptivity.
\newblock In {\em International Conference on Learning Representations}.

\bibitem[Zhang et~al., 2020c]{zhang2020adaptive}
Zhang, J., Karimireddy, S.~P., Veit, A., Kim, S., Reddi, S., Kumar, S., and Sra, S. (2020c).
\newblock Why are adaptive methods good for attention models?
\newblock {\em Advances in Neural Information Processing Systems}, 33:15383--15393.

\bibitem[Zhao et~al., 2021]{zhao2021convergence}
Zhao, S.-Y., Xie, Y.-P., and Li, W.-J. (2021).
\newblock On the convergence and improvement of stochastic normalized gradient descent.
\newblock {\em Science China Information Sciences}, 64:1--13.

\end{thebibliography}

\clearpage

\appendix

\section{Notation Table and Auxiliary Facts}
\begin{table}[ht]
\centering
\caption{Auxiliary notation used in the proofs.}
\begin{tabular}{|c|c|}
\hline
\textbf{Symbol} & \textbf{Formula} \\
\hline
$g_t$ & $\min\left\{1, \frac{\lambda}{\norm{\nabla f(x_t, \xi_t)}}\right\}\nabla f(x_t, \xi_t)$\\
\hline
$\theta_t$ & $g_t - \nabla f(x_t)$\\
\hline
$\hat{\theta}_t$ & $g_t - \clip(\nabla f(x_t), \nicefrac{\lambda}{2})$ \\
\hline
$\theta_t^u$ & $g_t - \E_{\xi_t}[g_t]$ \\
\hline
$\theta_t^b$ & $\E_{\xi_t}[g_t] - \nabla f(x_t)$ \\
\hline
$R_t$ & $\norm{x_t - x^*}$\\
\hline
\end{tabular}
\label{tab:notation}
\end{table}

The next lemma is used to control the bias and variance of the clipped stochastic gradient.
\begin{lemma}[Lemma 5.1 from \citep{sadiev2023high}]
\label{lem: clip-effect}
    Let $X$ be a random vector from $\R^d$ and $\widehat{X} = \clip(X, \lambda)$. Then, $\norm{\widehat{X} - \E\left[\widehat{X}\right]} \leq 2\lambda$. Moreover, if for some $\sigma \geq 0$ and $\alpha \in (1, 2]$ we have $\E\left[X\right] = x \in \R^d$, $\E\left[\norm{X - x}^\alpha\right] \leq \sigma^\alpha$, and $\norm{x} \leq \frac{\lambda}{2}$, then
    \begin{align*}
        \norm{\E\left[\widehat{X}\right] - x} &\leq \frac{2^\alpha\sigma^\alpha}{\lambda^{\alpha - 1}},\\
        \E\left[\norm{\widehat{X} - x}^2\right] &\leq 18\lambda^{2-\alpha}\sigma^\alpha,\\
        \E\left[\norm{\widehat{X} - \E\left[\widehat{X}\right]}^2\right] &\leq 18\lambda^{2-\alpha}\sigma^\alpha.\\
    \end{align*}
\end{lemma}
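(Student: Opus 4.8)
The plan is to study the clipping map $\widehat{X}=\clip(X,\lambda)=\min\{1,\lambda/\norm{X}\}X$ by elementary inequalities, splitting the sample space according to whether clipping is active. Since $\norm{\widehat{X}}=\min\{\norm{X},\lambda\}\le\lambda$ pointwise, Jensen's inequality gives $\norm{\E[\widehat{X}]}\le\E\norm{\widehat{X}}\le\lambda$, so the triangle inequality yields $\norm{\widehat{X}-\E[\widehat{X}]}\le\norm{\widehat{X}}+\norm{\E[\widehat{X}]}\le2\lambda$; this first claim needs no moment assumption.

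For the other three bounds, set $A\eqdef\{\norm{X}\le\lambda\}$, so that $\widehat{X}=X$ on $A$ and $\widehat{X}=\lambda X/\norm{X}$ on $A^c$. The crucial point is that on $A^c$ the hypothesis $\norm{x}\le\lambda/2$ forces $\norm{X-x}\ge\norm{X}-\norm{x}>\lambda/2$, so $\one_{A^c}\le(2\norm{X-x}/\lambda)^{\beta}$ holds pointwise for every $\beta>0$; taking expectations against $\E[\norm{X-x}^\alpha]\le\sigma^\alpha$ (with $\beta=\alpha$ and $\beta=\alpha-1$) gives the two estimates that drive everything: $\PP(A^c)\le2^\alpha\sigma^\alpha/\lambda^\alpha$ and $\E[\norm{X-x}\,\one_{A^c}]\le2^{\alpha-1}\sigma^\alpha/\lambda^{\alpha-1}$. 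For the bias, write $\E[\widehat{X}]-x=\E[\widehat{X}-X]$ using $\E[X]=x$; the integrand vanishes on $A$, while on $A^c$ we have $\widehat{X}-X=(\lambda/\norm{X}-1)X$, so $\norm{\widehat{X}-X}=\norm{X}-\lambda\le(\norm{X-x}+\norm{x})-\lambda\le\norm{X-x}$. Hence $\norm{\E[\widehat{X}]-x}\le\E[\norm{X-x}\,\one_{A^c}]\le2^{\alpha-1}\sigma^\alpha/\lambda^{\alpha-1}\le2^\alpha\sigma^\alpha/\lambda^{\alpha-1}$.

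For the second moment I would split $\E[\norm{\widehat{X}-x}^2]$ over $A$ and $A^c$. On $A$, $\widehat{X}=X$ and $\norm{X-x}\le\norm{X}+\norm{x}\le\tfrac32\lambda$, so $\norm{X-x}^2=\norm{X-x}^{2-\alpha}\norm{X-x}^\alpha\le(\tfrac32\lambda)^{2-\alpha}\norm{X-x}^\alpha$ (here $2-\alpha\ge0$ is essential), giving $\E[\norm{\widehat{X}-x}^2\one_A]\le(\tfrac32)^{2-\alpha}\lambda^{2-\alpha}\sigma^\alpha$. On $A^c$, $\norm{\widehat{X}-x}\le\norm{\widehat{X}}+\norm{x}\le\tfrac32\lambda$ pointwise, so $\E[\norm{\widehat{X}-x}^2\one_{A^c}]\le\tfrac94\lambda^2\PP(A^c)\le\tfrac94\cdot2^\alpha\lambda^{2-\alpha}\sigma^\alpha$. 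Adding the two and bounding the numerical constants over $\alpha\in(1,2]$ (so $(\tfrac32)^{2-\alpha}\le\tfrac32$ and $2^\alpha\le4$) yields a coefficient at most $\tfrac32+9\le18$, i.e. $\E[\norm{\widehat{X}-x}^2]\le18\lambda^{2-\alpha}\sigma^\alpha$. The variance bound then comes for free, since $\E[\widehat{X}]$ minimizes $c\mapsto\E[\norm{\widehat{X}-c}^2]$, whence $\E[\norm{\widehat{X}-\E[\widehat{X}]}^2]\le\E[\norm{\widehat{X}-x}^2]\le18\lambda^{2-\alpha}\sigma^\alpha$.

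The only genuinely delicate step is the second-moment estimate: the hypothesis controls only the $\alpha$-th moment with $\alpha$ possibly strictly below $2$, so $\E[\norm{X-x}^2]$ may well be infinite, and the argument must exploit that clipping makes $\norm{\widehat{X}}$ pointwise bounded by $\lambda$ while interpolating $\norm{X-x}^2=\norm{X-x}^{2-\alpha}\norm{X-x}^\alpha$ on the event where clipping is inactive (where automatically $\norm{X-x}\lesssim\lambda$). Everything else reduces to triangle inequalities and a Markov-type tail bound for $\PP(A^c)$; the slack between the constants I obtain ($2^{\alpha-1}$, $\tfrac32+9$) and those stated ($2^\alpha$, $18$) only simplifies the bookkeeping.
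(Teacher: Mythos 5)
Your proof is correct. The paper itself states this lemma without proof, importing it as Lemma 5.1 of \citet{sadiev2023high}; your argument is essentially the standard one used there -- split on whether clipping is active, bound the tail probability by a Markov-type inequality exploiting $\norm{x}\le\lambda/2$, and interpolate $\norm{X-x}^2=\norm{X-x}^{2-\alpha}\norm{X-x}^{\alpha}$ on the unclipped event -- the only cosmetic difference being that the original splits on $\{\norm{X-x}>\lambda/2\}$ rather than on $\{\norm{X}>\lambda\}$. All of your individual estimates check out, and your constants are in fact slightly sharper than the stated $2^{\alpha}$ and $18$.
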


Moreover, our analysis involves sums of martingale-difference sequences, to which Bernstein's inequality can be applied \citep{bennett1962probability, dzhaparidze2001bernstein, freedman1975tail}.

\begin{lemma}[Bernstein's inequality]
\label{lem: bernstein}
    Let the sequence of random variables $\{X_i\}_{i \geq 1}$ form a martingale difference sequence, i.e., $\E\left[X_i \ | \ X_{i-1}, \ldots, X_1\right] = 0$ for all $i \geq 1$. Assume that conditional variances $\sigma_i^2 = \E\left[X_i^2 \ | \ X_{i-1}, \ldots, X_1\right]$ exist and are bounded and also assume that there exists deterministic constant $c > 0$ such that $|X_i| \leq c$ almost surely for all $i \geq 1$. Then for all $b > 0$, $G > 0$ and $n \geq 1$
    \begin{align*}
        \mathbb{P}\left\{\abs{\sum\limits_{i=1}^n X_i} > b \text{ and } \sum\limits_{i=1}^n \sigma_i^2~\leq~G\right\} \leq 2\exp\left(-\frac{b^2}{2G + \frac{2cb}{3}}\right).
    \end{align*}

\end{lemma}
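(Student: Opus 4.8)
The plan is to prove this by the classical exponential-supermartingale (Chernoff) argument in the form due to Freedman. Write $S_t = \sum_{i=1}^t X_i$, let $\mathcal{F}_t = \sigma(X_1,\ldots,X_t)$ be the natural filtration, and set $V_t = \sum_{i=1}^t \sigma_i^2$ for the predictable variance proxy; note each $\sigma_i^2 = \E[X_i^2 \mid \mathcal{F}_{i-1}]$ is $\mathcal{F}_{i-1}$-measurable, so $V_t$ is $\mathcal{F}_{t-1}$-measurable. The first step is a one-step exponential-moment bound. Using that $u \mapsto (e^u - 1 - u)/u^2$ is nondecreasing, so that $e^u \le 1 + u + \tfrac{e^{\lambda c}-1-\lambda c}{(\lambda c)^2}\,u^2$ for all $u \le \lambda c$, I apply this with $u = \lambda X_i$ (legal since $X_i \le c$), take conditional expectation, and use $\E[X_i\mid\mathcal{F}_{i-1}]=0$ together with $1+y\le e^y$ to obtain, for every $\lambda > 0$,
\[
\E\!\left[e^{\lambda X_i}\mid\mathcal{F}_{i-1}\right] \le \exp\!\left(g(\lambda)\sigma_i^2\right), \qquad g(\lambda) := \frac{e^{\lambda c}-1-\lambda c}{c^2} \ge 0.
\]

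The second step turns this into a supermartingale. I define $M_t = \exp(\lambda S_t - g(\lambda)V_t)$ with $M_0 = 1$. Because $\sigma_t^2$ (hence $e^{-g(\lambda)\sigma_t^2}$) is $\mathcal{F}_{t-1}$-measurable, the one-step bound gives
\[
\E[M_t \mid \mathcal{F}_{t-1}] = M_{t-1}\,e^{-g(\lambda)\sigma_t^2}\,\E[e^{\lambda X_t}\mid\mathcal{F}_{t-1}] \le M_{t-1},
\]
so $\{M_t\}$ is a nonnegative supermartingale with $\E[M_n]\le 1$. On the event $\{S_n > b\}\cap\{V_n \le G\}$, and since $g(\lambda)\ge 0$ forces $-g(\lambda)V_n \ge -g(\lambda)G$, we get $M_n > \exp(\lambda b - g(\lambda)G)$; Markov's inequality then yields
\[
\PP\{S_n > b,\ V_n \le G\} \le e^{-\lambda b + g(\lambda)G}\,\E[M_n] \le \exp\!\left(-\lambda b + g(\lambda)G\right).
\]

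The third step optimizes over $\lambda$. I bound $g(\lambda) \le \tfrac{\lambda^2}{2(1-\lambda c/3)}$ for $0<\lambda<3/c$, which follows from the series estimate $e^u - 1 - u \le \tfrac{u^2/2}{1-u/3}$ (using $(j+2)! \ge 2\cdot 3^j$). Choosing $\lambda = \tfrac{b}{G + cb/3}\in(0,3/c)$ makes $1-\lambda c/3 = G/(G+cb/3)$, and a short substitution collapses the exponent to $-\tfrac{b^2}{2(G+cb/3)} = -\tfrac{b^2}{2G + \frac{2cb}{3}}$, proving the one-sided estimate $\PP\{S_n > b,\ V_n \le G\} \le \exp\!\big(-b^2/(2G+\tfrac{2cb}{3})\big)$. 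Finally, the sequence $\{-X_i\}$ is again a martingale difference with the same conditional variances $\sigma_i^2$ and the same almost-sure bound $c$, so the identical argument controls $\{-S_n > b,\ V_n \le G\}$; a union bound over the two one-sided events supplies the factor of $2$ and the claimed two-sided inequality.

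I expect the main technical point to be the one-step exponential-moment bound combined with correctly exploiting the predictability of $V_t$: it is essential that $\sigma_t^2$ be $\mathcal{F}_{t-1}$-measurable so that $e^{-g(\lambda)\sigma_t^2}$ factors out of the conditional expectation, which is precisely what allows the \emph{random} variance proxy to sit inside the supermartingale and makes the joint event $\{S_n > b\}\cap\{V_n \le G\}$ — rather than an unconditional tail — the natural object to bound. The remaining pieces (the convexity/series inequalities and the scalar optimization over $\lambda$) are routine.
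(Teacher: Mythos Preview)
Your proof is correct and follows the standard Freedman exponential-supermartingale argument. The paper itself does not prove this lemma: it is stated as an auxiliary fact with citations to \citet{bennett1962probability}, \citet{dzhaparidze2001bernstein}, and \citet{freedman1975tail}, so there is no paper proof to compare against beyond noting that your approach is precisely the classical one underlying those references.
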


\clearpage

\section{Missing Proofs}\label{appendix:proof}

\subsection{Lemmas}

\begin{lemma}[Different cases]
\label{lem: start-uniform}
    Suppose that Assumptions \ref{asm: convexity} and \ref{asm: smoothness} hold. Then, the sequence $\{x_k\}_{k=0}^{K}$, generated by \cref{alg:clip-sgd} after $K$ iterations, satisfies following inequalities.
    \begin{enumerate}
        \item[\textbf{Case 1}.] If $\norm{\nabla f(x_k)} \leq \frac{L_0}{L_1}$, we have
        \begin{align*}
            \gamma (f(x_k) - f^*) \leq \sqn{x_k - x^*} - \sqn{x_{k+1} - x^*} - 2\gamma\ev{\theta_k, x_k - x^*} + 2\gamma^2\sqn{\theta_k}  
        \end{align*}
        with $\theta_k \eqdef g_k - \nabla f(x_k)$, $\gamma \leq \frac{1}{16L_0}$ and any $\lambda > 0$.
        \item[\textbf{Case 2}.] If $\frac{\lambda}{2} \geq \norm{\nabla f(x_k)} \geq \frac{L_0}{L_1}$, then
        \begin{align*}
            \gamma (f(x_k) - f^*) &\leq \sqn{x_k - x^*} - \sqn{x_{k+1} - x^*} - 2\gamma \ev{{\theta}_k, x_k - x^*} + 2\gamma^2\sqn{\theta_k}
        \end{align*}
        with $\theta_k \eqdef g_k - \nabla f(x_k)$, $\gamma \leq \frac{1}{8L_1 \lambda}$.
        \item[\textbf{Case 3}.] If $\norm{\nabla f(x_k)} \geq \frac{\lambda}{2} \geq \frac{L_0}{L_1}$, we get
        \begin{align*}
        \sqn{x_{k+1} - x^*} \leq \sqn{x_k - x^*} - \frac{\gamma\lambda}{16L_1} - 2\gamma\ev{\hat{\theta}_k, x_k - x^*}
        \end{align*}
        with $\hat\theta_k \eqdef g_k - \clip(\nabla f(x_k), \nicefrac{\lambda}{2})$, $\gamma \leq \frac{1}{16L_1 \lambda}$.
    \end{enumerate}
\end{lemma}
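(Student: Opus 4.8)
The plan is to prove each of the three cases separately, since they correspond to disjoint ranges of $\norm{\nabla f(x_k)}$ and require different smoothness bookkeeping. In all three cases the starting point is the standard single-step expansion for the update $x_{k+1} = x_k - \gamma g_k$:
\begin{align*}
    \sqn{x_{k+1} - x^*} = \sqn{x_k - x^*} - 2\gamma\ev{g_k, x_k - x^*} + \gamma^2\sqn{g_k}.
\end{align*}
Writing $g_k = \nabla f(x_k) + \theta_k$ (or, in Case 3, $g_k = \clip(\nabla f(x_k), \nicefrac{\lambda}{2}) + \hat\theta_k$) splits the cross term into a ``signal'' part involving $\ev{\nabla f(x_k), x_k - x^*}$ and a ``noise'' part that is kept verbatim on the right-hand side. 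Convexity (Assumption~\ref{asm: convexity}) gives $\ev{\nabla f(x_k), x_k - x^*} \geq f(x_k) - f^*$, so the job reduces to absorbing $\gamma^2\sqn{g_k}$ (where $g_k$ is the \emph{true} direction, not $\theta_k$) into a fraction of $\gamma(f(x_k) - f^*)$ using the appropriate descent estimate, and to collecting the leftover noise terms into $-2\gamma\ev{\theta_k, x_k-x^*} + 2\gamma^2\sqn{\theta_k}$ via $\sqn{a+b}\leq 2\sqn{a}+2\sqn{b}$.

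For \textbf{Case 1} ($\norm{\nabla f(x_k)} \leq L_0/L_1$): here Proposition~\ref{prop: smooth-equiv} yields $\norm{\nabla f(x_k)}^2 \leq \frac{2(L_0 + L_1\norm{\nabla f(x_k)})}{\nu}(f(x_k)-f^*) \leq \frac{4L_0}{\nu}(f(x_k)-f^*)$, so $\gamma^2\sqn{\nabla f(x_k)}$ is at most $\cO(\gamma^2 L_0)(f(x_k)-f^*)$, which is dominated by $\tfrac12\gamma(f(x_k)-f^*)$ once $\gamma \lesssim 1/L_0$; the stated constant $\gamma\le \frac{1}{16L_0}$ is what makes the bookkeeping (including the factor-$2$ from splitting $g_k$) work. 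For \textbf{Case 2} ($L_0/L_1 \le \norm{\nabla f(x_k)} \le \lambda/2$): the same proposition gives $\norm{\nabla f(x_k)}^2 \leq \frac{2(L_0+L_1\norm{\nabla f(x_k)})}{\nu}(f(x_k)-f^*) \leq \frac{4L_1\norm{\nabla f(x_k)}}{\nu}(f(x_k)-f^*) \leq \frac{2L_1\lambda}{\nu}(f(x_k)-f^*)$, so now the natural stepsize restriction is $\gamma \lesssim 1/(L_1\lambda)$, matching $\gamma \le \frac{1}{8L_1\lambda}$. For \textbf{Case 3} ($\norm{\nabla f(x_k)} \ge \lambda/2 \ge L_0/L_1$): there is no function-value gain to extract, so instead we want a \emph{strict} decrease of $\sqn{x_k-x^*}$ of order $\gamma\lambda/L_1$. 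Using $g_k = \clip(\nabla f(x_k),\nicefrac\lambda2) + \hat\theta_k$, the deterministic part of the cross term is $-2\gamma\ev{\clip(\nabla f(x_k),\nicefrac\lambda2), x_k - x^*}$; by convexity and $(L_0,L_1)$-smoothness one lower-bounds $\ev{\nabla f(x_k), x_k-x^*}$ and hence $\ev{\clip(\nabla f(x_k),\nicefrac\lambda2), x_k-x^*} = \tfrac{\lambda}{2\norm{\nabla f(x_k)}}\ev{\nabla f(x_k), x_k - x^*}$ from below by something like $\tfrac{\lambda}{2\norm{\nabla f(x_k)}}\cdot\tfrac{\nu\norm{\nabla f(x_k)}^2}{2L_1\lambda} = \tfrac{\nu\norm{\nabla f(x_k)}}{4L_1}\ge \tfrac{\nu\lambda}{8L_1}$ (using Proposition~\ref{prop: smooth-equiv} in the regime where $L_1\norm{\nabla f(x_k)}$ dominates $L_0$); combined with $\gamma^2\sqn{g_k}\le \gamma^2\cdot 4\lambda^2$ (since $\norm{\clip(\cdot,\nicefrac\lambda2)}\le\lambda/2$ and one still has to handle $\hat\theta_k$ — but its deterministic magnitude can be bounded using the same clipping geometry), the restriction $\gamma\le\frac{1}{16L_1\lambda}$ makes $\gamma^2\lambda^2 \lesssim \gamma\lambda/L_1$, leaving the claimed $-\gamma\lambda/(16L_1)$ after the constants settle.

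The main obstacle I anticipate is \textbf{Case 3}, specifically handling $\ev{\hat\theta_k, x_k - x^*}$ cleanly and lower-bounding the clipped true-gradient term with the correct constant. Unlike Cases 1 and 2, where $\theta_k = g_k - \nabla f(x_k)$ is a genuine mean-zero-after-conditioning-ish noise that is simply passed through, here $\hat\theta_k = g_k - \clip(\nabla f(x_k),\nicefrac\lambda2)$ mixes the stochastic clipping error with the deterministic gap between clipping the true gradient at level $\lambda/2$ versus the stochastic gradient at level $\lambda$, and one must be careful that no uncontrolled deterministic term of order $\gamma\lambda$ with the wrong sign sneaks in — this is exactly why the threshold for the true gradient is taken at $\lambda/2$ rather than $\lambda$, giving slack. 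I would handle it by writing $\hat\theta_k = (g_k - \clip(\nabla f(x_k),\lambda)) + (\clip(\nabla f(x_k),\lambda) - \clip(\nabla f(x_k),\nicefrac\lambda2))$, bounding the norm of the second (purely deterministic) piece by $\lambda/2$ and absorbing it into the $\gamma^2\lambda^2$ budget, while the first piece is the controllable clipped-noise term. Getting the numerical constants to land on exactly $\frac{\gamma\lambda}{16L_1}$ and the stated stepsize bounds is a matter of tracking the factor-$2$'s through $\sqn{a+b}\le 2\sqn a + 2\sqn b$ and the $\nu$ from Proposition~\ref{prop: smooth-equiv}; I would not expect conceptual difficulty there, only careful arithmetic.
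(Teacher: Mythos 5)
Your proposal follows essentially the same route as the paper's proof: expand $\sqn{x_{k+1}-x^*}$, split $g_k$ into a deterministic part plus $\theta_k$ (resp.\ $\clip(\nabla f(x_k),\nicefrac{\lambda}{2})$ plus $\hat\theta_k$), apply convexity, and invoke Proposition~\ref{prop: smooth-equiv} with $L_0+L_1\norm{\nabla f(x_k)}$ bounded by $2L_0$ or $2L_1\norm{\nabla f(x_k)}$ depending on the case; Cases~1 and~2 match the paper verbatim, and your Case~3 lower bound $\ev{\clip(\nabla f(x_k),\nicefrac{\lambda}{2}),x_k-x^*}\geq \nicefrac{\nu\lambda}{8L_1}$ is exactly the paper's $-\nicefrac{\gamma\lambda}{8L_1}$ cross-term estimate. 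One concrete execution issue in Case~3: bounding $\gamma^2\sqn{g_k}\leq 4\gamma^2\lambda^2$ via the split $g_k=\clip(\nabla f(x_k),\nicefrac{\lambda}{2})+\hat\theta_k$ is too lossy --- with $\gamma\leq\frac{1}{16L_1\lambda}$ this gives $4\gamma^2\lambda^2\leq\frac{\gamma\lambda}{4L_1}$, which swamps the $-\frac{\gamma\lambda}{8L_1}$ gain and leaves no negative drift. The paper instead uses $\norm{g_k}\leq\lambda$ directly (it is the stochastic gradient clipped at level $\lambda$), giving $\gamma^2\lambda^2\leq\frac{\gamma\lambda}{16L_1}$ and hence the stated $-\frac{\gamma\lambda}{16L_1}$; with that one-line replacement your argument closes. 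Your anticipated difficulty with $\ev{\hat\theta_k,x_k-x^*}$ is a non-issue at this stage --- the lemma keeps that term verbatim on the right-hand side, and its control is deferred to Part~3 of the theorem's proof --- so the further decomposition of $\hat\theta_k$ you sketch is unnecessary here. (Also note the intermediate bound $f(x_k)-f^*\geq\frac{\nu\sqn{\nabla f(x_k)}}{2L_1\lambda}$ is not valid when $\norm{\nabla f(x_k)}$ is large; the correct chain is $f(x_k)-f^*\geq\frac{\nu\norm{\nabla f(x_k)}}{4L_1}$, which lands on the same endpoint.)
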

\begin{proof}
    We start our proof using the update rule of \cref{alg:clip-sgd}:
    \begin{align}
    \label{eq: start-descent}
        \sqn{x_{k+1} - x^*} = \sqn{x_k - x^*} - 2\gamma \ev{g_k, x_k - x^*} + \gamma^2\sqn{g_k}.  
    \end{align}
    The rest of the proof depends on the relation between $\lambda, \norm{\nabla f(x_k)}$, and $\frac{L_0}{L_1}$.
    
    \textbf{Case 1:} $\norm{\nabla f(x_k)} \leq \frac{L_0}{L_1}$. Using the definition of $\theta_k$ (see Table~\ref{tab:notation}), we can decompose \eqref{eq: start-descent} as follows:
    \begin{align}
    \label{eq: leq}
        \sqn{x_{k+1} - x^*} &\leq \sqn{x_k - x^*} - 2\gamma \ev{\nabla f(x_k), x_k - x^*} - 2\gamma \ev{\theta_k, x_k - x^*} \nonumber\\&+ 2\gamma^2\sqn{\nabla f(x_k)} + 2\gamma^2\sqn{\theta_k}.
    \end{align}
    Using \cref{prop: smooth-equiv} with $\norm{\nabla f(x_k)} \leq \frac{L_0}{L_1}$ and $\nu \geq \frac{1}{2}$, we get
    \begin{align}
    \label{eq: prop-1}
        \sqn{\nabla f(x_k)} \leq 4(L_0 + L_1\norm{\nabla f(x_k)})(f(x_k) - f^*) \leq 8L_0(f(x_k) - f^*),
    \end{align}
    where we also use $\norm{\nabla f(x_k)} \leq \frac{L_0}{L_1}$ in the last step. Applying the convexity of $f$ (\cref{asm: convexity}) and substituting \eqref{eq: prop-1} into \eqref{eq: leq}, one can obtain
    \begin{align*}
        \sqn{x_{k+1} - x^*} &\leq \sqn{x_k - x^*} - 2\gamma \ev{\theta_k, x_k - x^*} + 2\gamma^2\sqn{\theta_k} - (2\gamma - 16\gamma^2L_0) (f(x_k) - f^*).
    \end{align*}
    Then, the above inequality combined with the stepsize condition $\gamma \leq \frac{1}{16L_0}$ imply
    \begin{align*}
        \gamma (f(x_k) - f^*) \leq \sqn{x_k - x^*} - \sqn{x_{k+1} - x^*} - 2\gamma\ev{\theta_k, x_k - x^*} + 2\gamma^2\sqn{\theta_k}.  
    \end{align*}
    \textbf{Case 2:} $\frac{\lambda}{2} \geq \norm{\nabla f(x_k)} > \frac{L_0}{L_1}$. In this case, \cref{prop: smooth-equiv} gives
    \begin{align}
    \label{eq: prop-2}
         \sqn{\nabla f(x_k)} \leq 4(L_0 + L_1\norm{\nabla f(x_k)})(f(x_k) - f^*) \leq 8L_1\norm{\nabla f(x_k)}(f(x_k) - f^*).
    \end{align}
    Therefore, using the same decomposition \eqref{eq: leq} as in \textbf{Case 1}, applying \eqref{eq: prop-2} and choosing $\gamma \leq \frac{1}{8L_1 \lambda}$, we obtain
    \begin{align*}
        \sqn{x_{k+1} - x^*} &\leq \sqn{x_k - x^*} - 2\gamma \ev{\nabla f(x_k), x_k - x^*} - 2\gamma \ev{\theta_k, x_k - x^*} \\&+ 2\gamma^2\sqn{\nabla f(x_k)} + 2\gamma^2\sqn{\theta_k} \\&\leq \sqn{x_k - x^*} - 2\gamma \ev{\theta_k, x_k - x^*} + 2\gamma^2\sqn{\theta_k} \\&- (2\gamma - 16\gamma^2L_1\norm{\nabla f(x_k)}) (f(x_k) - f^*) \\&\leq \sqn{x_k - x^*} - 2\gamma \ev{\theta_k, x_k - x^*} + 2\gamma^2\sqn{\theta_k} \\&- (2\gamma - 8\gamma^2L_1\lambda) (f(x_k) - f^*) \\&\leq \sqn{x_k - x^*} - 2\gamma \ev{\theta_k, x_k - x^*} + 2\gamma^2\sqn{\theta_k} - \gamma (f(x_k) - f^*),
    \end{align*}
    where we use $\norm{\nabla f(x_k)} \leq \frac{\lambda}{2}$. Rearranging the terms, we conclude this part of the proof.

    \textbf{Case 3:} $\norm{\nabla f(x_k)} > \frac{\lambda}{2} \geq \frac{L_0}{L_1}$. Using this relation and the definition of $\hat{\theta}_k$ (see Table~\ref{tab:notation}), we decompose \eqref{eq: start-descent}:
    \begin{align}
    \label{eq: leq3}
        \sqn{x_{k+1} - x^*} &= \sqn{x_k - x^*} - 2\gamma\ev{g_k, x_k - x^*} + \gamma^2\sqn{g_k} \nonumber\\&\leq \sqn{x_k - x^*} - \frac{\gamma\lambda}{\norm{\nabla f(x_k)}}\ev{\nabla f(x_k), x_k - x^*} - 2\gamma\ev{\hat{\theta}_k, x_k - x^*} + \gamma^2\lambda^2.
    \end{align}
    Applying the convexity of $f$, and then combining it with \eqref{eq: prop-2}, one can get
    \begin{align*}
        \sqn{x_{k+1} - x^*} &\leq \sqn{x_k - x^*} - \frac{\gamma\lambda}{\norm{\nabla f(x_k)}}\ev{\nabla f(x_k), x_k - x^*} - 2\gamma\ev{\hat{\theta}_k, x_k - x^*} + \gamma^2\lambda^2 \\
        &\leq \sqn{x_k - x^*} - \frac{\gamma\lambda}{\norm{\nabla f(x_k)}}(f(x_k) - f^*) - 2\gamma\ev{\hat{\theta}_k, x_k - x^*} + \gamma^2\lambda^2 \\
        &\overset{\eqref{eq: prop-2}}{\leq} \sqn{x_k - x^*} - \frac{\gamma\lambda}{8L_1} - 2\gamma\ev{\hat{\theta}_k, x_k - x^*} + \gamma^2\lambda^2.
    \end{align*}
    Using that $\gamma \leq \frac{1}{16L_1 \lambda}$, we have
    \begin{align*}
        \sqn{x_{k+1} - x^*} &\leq \sqn{x_k - x^*} - \frac{\gamma\lambda}{8L_1} - 2\gamma\ev{\hat{\theta}_k, x_k - x^*} + \gamma^2\lambda^2 \\&\leq \sqn{x_k - x^*} - \frac{\gamma\lambda}{16L_1} - 2\gamma\ev{\hat{\theta}_k, x_k - x^*}.
    \end{align*}
    This concludes the proof.
\end{proof}

\begin{remark}
    We note, the \textbf{Case 1} does not use the fact that $\lambda \geq \frac{2L_0}{L_1}$. Moreover, as will be shown later, the proof of the main result has two possible regimes, and for each of them we will apply the corresponding cases from  \cref{lem: start-uniform}.
\end{remark}

\begin{lemma}[Descent lemma]
\label{lem: summing}
    Suppose that Assumptions \ref{asm: convexity} and \ref{asm: smoothness} hold. Then, after $K$ iterations of \cref{alg:clip-sgd}, we have two possible options.
    \begin{enumerate}
        \item[\textbf{Option 1.}] If for all $k = 0, \ldots, K-1$ the inequality $\norm{\nabla f(x_k)} \leq \frac{L_0}{L_1}$ holds, $\lambda > 0$, and $\gamma \leq \frac{1}{16L_0}$, then
        \begin{align*}
            \sum\limits_{k=0}^{K-1} \gamma (f(x_k) - f^*) \leq \sqn{x_0 - x^*} - \sqn{x_K - x^*} - \sum\limits_{k=0}^{K-1}2\gamma\ev{\theta_k, x_k - x^*} + \sum\limits_{k=0}^{K-1}\sqn{\theta_k}.
        \end{align*}
        \item[\textbf{Option 2.}] If $\lambda \geq \frac{2L_0}{L_1}$ and $\gamma \leq \min\left\{\frac{1}{16L_0}, \frac{1}{16L_1 \lambda}\right\}$, then 
        \begin{align*}
            \sum_{k \in {T_1 \cup T_2}} \gamma(f(x_k) - f^*) &\leq \sqn{x_0 - x^*} - \sqn{x_K - x^*} - \sum_{k \in T_1 \cup T_2} 2\gamma\ev{\theta_k, x_k - x^*} \\&+ \sum_{k \in T_1 \cup T_2}2\gamma^2\sqn{\theta_k} - \sum_{k \in T_3}2\gamma\ev{\hat{\theta}_k, x_k - x^*} - \frac{\gamma\lambda|T_3|}{16L_1},
        \end{align*}
        where
        \begin{align*}
            T_1 &\eqdef T_1(K) \eqdef \left\{k \in 0, \ldots, K-1 \Big| \norm{\nabla f(x_k)} \leq \frac{L_0}{L_1}\right\},\\
            T_2 &\eqdef T_2(K) \eqdef \left\{k \in 0, \ldots, K-1 \Big| \frac{\lambda}{2} \geq \norm{\nabla f(x_k)} > \frac{L_0}{L_1}\right\},\\
            T_3 &\eqdef T_3(K) \eqdef \left\{k \in 0, \ldots, K-1 \Big| \norm{\nabla f(x_k)} > \frac{\lambda}{2}\right\}.
        \end{align*}
    \end{enumerate}
\end{lemma}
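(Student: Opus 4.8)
The plan is to obtain \textbf{Lemma~\ref{lem: summing}} by summing the per-iteration inequalities from \textbf{Lemma~\ref{lem: start-uniform}} over $k=0,\dots,K-1$, treating the two options separately.

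For \textbf{Option 1}, the hypothesis $\norm{\nabla f(x_k)} \le \frac{L_0}{L_1}$ holds for every $k$, so every iteration falls into \textbf{Case 1} of \cref{lem: start-uniform}, whose hypotheses are exactly $\gamma \le \frac{1}{16L_0}$ and $\lambda > 0$. Thus I would simply add up
\[
\gamma(f(x_k) - f^*) \le \sqn{x_k - x^*} - \sqn{x_{k+1} - x^*} - 2\gamma\ev{\theta_k, x_k - x^*} + 2\gamma^2\sqn{\theta_k}
\]
over $k = 0,\dots,K-1$; the $\sqn{x_k - x^*}$ terms telescope to $\sqn{x_0 - x^*} - \sqn{x_K - x^*}$, and the two sums over $\theta_k$ collect as stated. (I would double-check whether the intended right-hand side carries the coefficient $2\gamma^2$ on $\sqn{\theta_k}$ as in Case~1, since the statement as printed writes $\sum \sqn{\theta_k}$; assuming it is a typo, the telescoping gives the $2\gamma^2$ factor.)

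For \textbf{Option 2}, I would partition $\{0,\dots,K-1\} = T_1 \cup T_2 \cup T_3$ according to the size of $\norm{\nabla f(x_k)}$ relative to $\frac{L_0}{L_1}$ and $\frac{\lambda}{2}$, exactly as in the definitions given. The condition $\lambda \ge \frac{2L_0}{L_1}$ guarantees that $\frac{\lambda}{2} \ge \frac{L_0}{L_1}$, so the three index sets are precisely the domains of validity of \textbf{Cases~1, 2, 3} respectively, and the hypothesis $\gamma \le \min\{\frac{1}{16L_0}, \frac{1}{16L_1\lambda}\}$ simultaneously satisfies the stepsize requirements $\gamma \le \frac{1}{16L_0}$ (Case~1), $\gamma \le \frac{1}{8L_1\lambda}$ (Case~2, implied), and $\gamma \le \frac{1}{16L_1\lambda}$ (Case~3). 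For $k \in T_1 \cup T_2$ I would use the common bound
\[
\gamma(f(x_k) - f^*) \le \sqn{x_k - x^*} - \sqn{x_{k+1} - x^*} - 2\gamma\ev{\theta_k, x_k - x^*} + 2\gamma^2\sqn{\theta_k},
\]
and for $k \in T_3$ I would drop the nonnegative term $\gamma(f(x_k)-f^*)$ (or rather note $f(x_k)\ge f^*$ so adding $\gamma(f(x_k)-f^*)$ to the left is harmless only if it appears there; more cleanly, for $k\in T_3$ the Case~3 inequality gives $\sqn{x_{k+1}-x^*} \le \sqn{x_k - x^*} - \frac{\gamma\lambda}{16L_1} - 2\gamma\ev{\hat\theta_k, x_k - x^*}$ directly). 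Summing all $K$ inequalities, the $\sqn{x_k - x^*}$ telescope across the whole range $0,\dots,K-1$ regardless of which case each index belongs to, yielding $\sqn{x_0 - x^*} - \sqn{x_K - x^*}$; the $T_3$ contributions supply $-\sum_{k\in T_3} 2\gamma\ev{\hat\theta_k, x_k - x^*} - \frac{\gamma\lambda |T_3|}{16L_1}$, and the $T_1\cup T_2$ contributions supply the remaining terms, giving exactly the claimed inequality.

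The only genuinely delicate point — and the step I would be most careful about — is verifying that the telescoping of $\sqn{x_k - x^*}$ is legitimate when different indices invoke different cases: this works because each case's inequality, before rearrangement, is of the uniform form $\sqn{x_{k+1}-x^*} \le \sqn{x_k-x^*} + (\text{stuff at step }k)$, so the sum over a consecutive block of indices collapses cleanly no matter how the cases are interleaved. Everything else is bookkeeping: checking that the single stepsize condition in each Option implies all the per-case stepsize conditions, and that $\lambda \ge \frac{2L_0}{L_1}$ is exactly what makes the case boundaries in $T_1, T_2, T_3$ line up with those in \cref{lem: start-uniform}. No concentration or probabilistic tools are needed here — \cref{lem: bernstein} and \cref{lem: clip-effect} enter only later when the $\theta_k$ and $\hat\theta_k$ sums are controlled.
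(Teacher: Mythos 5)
Your proposal is correct and matches the paper's own (very terse) proof, which likewise just aggregates the per-iteration inequalities of \cref{lem: start-uniform} over the index sets $T_1, T_2, T_3$ and telescopes the $\sqn{x_k - x^*}$ terms. You are also right that the $\sum_{k}\sqn{\theta_k}$ in Option~1 is a typo for $\sum_{k}2\gamma^2\sqn{\theta_k}$, as confirmed by the way the lemma is invoked later in equation \eqref{eq: thm-lem-0}.
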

\begin{proof}
    The final result follows directly from \cref{lem: start-uniform}. Specifically, for the first option, we use only \textit{Case 1} from \cref{lem: start-uniform}, and for the second one, we apply \textit{Cases 1, 2, 3}, respectively. Hence, aggregating the inequalities established therein yields the desired conclusion and completes the proof. 
\end{proof}
\begin{remark}
    It is worth noting that \cref{lem: summing} covers the case of $L_1 = 0$. Indeed, in this case, we have $\nicefrac{L_0}{L_1} = \infty$, meaning that $\norm{\nabla f(x_k)} \leq \nicefrac{L_0}{L_1}$ is always satisfied, i.e., one can consider \textit{Option 1} only.
\end{remark}

\subsection{Proof of Theorem~\ref{thm: main-result}}

\begin{theorem}[Theorem~\ref{thm: main-result}]
    Let Assumptions \ref{asm: convexity}, \ref{asm: smoothness}, and \ref{asm: stochastic} hold. Then, after $K$ iterations of \algname{Clip-SGD} (\cref{alg:clip-sgd}) with
    \begin{align}
    \label{eq: lambda-choice}
        \lambda = \max\left\{2L_0\min\left\{4R_0, \frac{1}{L_1}\right\}, 9^\frac{1}{\alpha}\sigma K^{\frac{1}{\alpha}} \left(\ln\left(\frac{4K}{\delta}\right)\right)^{-\frac{1}{\alpha}}\right\},
    \end{align}
    \begin{align}
    \label{eq: gamma-choice}
        \gamma = \frac{1}{160 \lambda\ln\left(\frac{4K}{\delta}\right)} \min\left\{4R_0, \frac{1}{L_1}\right\}, 
    \end{align}
    we have the following result.
    \begin{itemize}
        \item If $4R_0 \leq \frac{1}{L_1}$, then
        \begin{align*}
            f\left(\frac{1}{K}\sum\limits_{k=0}^{K-1} x_k\right) - f^* = \tilde{\mathcal{O}}\left(\frac{L_0R_0^2}{K}, \frac{R_0\sigma}{K^{\nicefrac{(\alpha-1)}{\alpha}}}\right)
        \end{align*}
        with probability at least $1 - \delta$.
        \item If $4R_0 \geq \frac{1}{L_1}$ and $K = \Omega\left(\frac{(L_1R_0)^{2+\alpha}\ln^2\left(\frac{4K}{\delta}\right)}{\delta}\right)$
        \begin{align*}
            \min_{k = 0, \ldots, K-1}(f(x_k) - f^*) = \mathcal{\Tilde{O}}\left(\max\left\{\frac{L_0R_0^2}{K}, \frac{L_1R_0^2\sigma}{K^{\nicefrac{(\alpha-1)}{\alpha}}}\right\}\right)
        \end{align*}
    with probability at least $1 - 2\delta$.
    \end{itemize}
\end{theorem}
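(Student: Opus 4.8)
The plan is to follow the template of \citet{sadiev2023high} for high-probability analysis via an inductive control of a family of events, but with the descent inequalities supplied by \cref{lem: summing} (\textbf{Option 1} in the first regime, \textbf{Option 2} in the second). For the first regime $4R_0 \le \nicefrac{1}{L_1}$, I would define the events
\[
E_k = \left\{ \text{for all } t = 0,\dots,k:\ \sqn{x_t - x^*} \le 2R_0^2 \ \text{ and } \ \textstyle\sum_{l=0}^{t-1}\gamma(f(x_l)-f^*) \le 2R_0^2 \right\},
\]
and prove by induction that $\PP\{E_k\} \ge 1 - \nicefrac{k\delta}{K}$. On $E_k$ all iterates stay in $B_{\sqrt{2}R_0}(x^*)$, hence $\norm{\nabla f(x_l)} \le \cO(L_0 R_0) \le \cO(L_0/L_1)$ on this ball (using \cref{prop: smooth-equiv} and $4R_0 \le 1/L_1$), so \textbf{Option 1} applies with the stated $\gamma$. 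The inductive step then amounts to bounding $-\sum 2\gamma\ev{\theta_l, x_l - x^*}$ and $\sum 2\gamma^2\sqn{\theta_l}$. Splitting $\theta_l = \theta_l^u + \theta_l^b$, the bias term $\theta_l^b$ is controlled deterministically by $\nicefrac{2^\alpha\sigma^\alpha}{\lambda^{\alpha-1}}$ via \cref{lem: clip-effect} (here one checks $\norm{\nabla f(x_l)} \le \nicefrac{\lambda}{2}$, which holds on $E_k$ by the choice of $\lambda$), and the unbiased part is a bounded martingale difference ($\norm{\theta_l^u} \le 2\lambda$) with conditional variance $\le 18\lambda^{2-\alpha}\sigma^\alpha$, to which Bernstein's inequality (\cref{lem: bernstein}) applies. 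Substituting the choices \eqref{eq: lambda-choice}--\eqref{eq: gamma-choice} makes each contribution $\le \nicefrac{R_0^2}{(\text{const})}$ with probability $\ge 1 - \nicefrac{\delta}{K}$; a union bound closes the induction, and evaluating $\nicefrac{2R_0^2}{(\gamma K)}$ with the two-part $\lambda$ gives the advertised $\tilde\cO(\nicefrac{L_0 R_0^2}{K} + \nicefrac{R_0\sigma}{K^{(\alpha-1)/\alpha}})$ rate after Jensen.

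For the second regime $4R_0 \ge \nicefrac{1}{L_1}$, I would use \textbf{Option 2} of \cref{lem: summing} and define $E_k$ to additionally include the bound $0 \le 2R_0^2 - \nicefrac{\gamma\lambda|T_3(k)|}{32L_1}$ coming from the $T_3$ term in the descent inequality; this forces $|T_3(k)| \le C_1 := 10240(L_1R_0)^2\ln(\nicefrac{4K}{\delta})$ on $E_k$. The terms indexed by $T_1 \cup T_2$ are handled exactly as in the first regime (Bernstein on the martingale part, deterministic bound on the bias). The genuinely new object is $\sum_{l \in T_3}(-2\gamma\ev{\hat\theta_l, x_l - x^*})$. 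For $l \in T_3$ I would show the deterministic implication: if $\norm{\nabla f(x_l, \xi_l) - \nabla f(x_l)} \le B := \nicefrac{\lambda}{128 L_1 R_0}$, then $\norm{g_l - \clip(\nabla f(x_l), \nicefrac{\lambda}{2})} $ is small enough that $-2\gamma\ev{\hat\theta_l, x_l - x^*} \le \nicefrac{\gamma\lambda}{32 L_1}$, so that this term is dominated by the negative $-\nicefrac{\gamma\lambda|T_3|}{16L_1}$ already present. The probability that a given noise realization violates $\norm{\xi_l} \le B$ is, by Markov's inequality, at most $\nicefrac{\sigma^\alpha}{B^\alpha} = \cO\big(\nicefrac{(L_1R_0)^\alpha \ln(\nicefrac{4K}{\delta})}{K}\big)$ under the assumed lower bound on $K$; since at most $C_1$ indices ever land in $T_3$, a union bound over these $C_1$ "bad chances" costs total probability $C_1 \cdot \nicefrac{\sigma^\alpha}{B^\alpha} = \cO(\delta)$ precisely when $K = \Omega\big(\nicefrac{(L_1R_0)^{2+\alpha}\ln^2(\nicefrac{4K}{\delta})}{\delta}\big)$. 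This is why the induction hypothesis must be weakened to the refined form
\[
\PP\{E_k\} \ \ge\ 1 - \frac{k\delta}{K} - \sum_{r=0}^{k} \min\left\{\frac{r}{C_1},1\right\}\delta\,\PP\{|T_3(k)| = r\},
\]
which still yields $\PP\{E_K\} \ge 1 - 2\delta$ because $|T_3(K)| \le C_1$ on $E_K$ makes the last sum at most $\delta$; one then reads off $\min_k(f(x_k)-f^*) \le \nicefrac{2R_0^2}{(\gamma|T_1\cup T_2|)}$, and since $|T_1 \cup T_2| = K - |T_3| \ge K/2$ for $K$ large, substituting $\gamma$ and $\lambda$ gives $\tilde\cO(\max\{\nicefrac{L_0R_0^2}{K}, \nicefrac{L_1R_0^2\sigma}{K^{(\alpha-1)/\alpha}}\})$.

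The main obstacle, and the step I would spend the most care on, is the coupling between the two failure mechanisms in the second regime: the Bernstein-type events on the $T_1 \cup T_2$ part and the Markov-type events on the $T_3$ part are not independent, and the sets $T_1(t), T_2(t), T_3(t)$ are themselves random and determined by the trajectory. Making the induction rigorous requires being careful that, conditioned on $E_{t-1}$ and on the value of $|T_3(t-1)|$, the next step's contribution to each sum is still a well-defined (super)martingale increment with the claimed almost-sure and conditional-variance bounds — in particular that the deterministic implication "$\norm{\xi_l}\le B \Rightarrow$ small inner product" genuinely holds on the event $E_{t-1}$ (which is where $R_l \le \sqrt{2}R_0$ is used to bound $\ev{\hat\theta_l, x_l - x^*}$ by $\norm{\hat\theta_l}\cdot\sqrt{2}R_0$). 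A secondary technical point is verifying the hypothesis $\norm{\nabla f(x_l)} \le \nicefrac{\lambda}{2}$ needed for the bias bound of \cref{lem: clip-effect} in \textbf{Case 2}, where $\norm{\nabla f(x_l)}$ can be as large as $\nicefrac{\lambda}{2}$ by definition of $T_2$ — so the bound holds with equality at the boundary and one must track constants accordingly. Everything else is a careful but routine bookkeeping of the constants in \eqref{eq: lambda-choice}--\eqref{eq: gamma-choice} against the $\nicefrac{1}{160}$, $\nicefrac{1}{16}$, and $\nicefrac{1}{32}$ factors appearing in Lemmas~\ref{lem: start-uniform}--\ref{lem: summing}.
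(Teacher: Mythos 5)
Your proposal is correct and follows essentially the same route as the paper: the same two-regime split, the same events $E_k$ with the refined induction hypothesis involving $\sum_{r}\min\{r/C_1,1\}\,\delta\,\PP\{|T_3(k)|=r\}$, Bernstein's inequality on the $T_1\cup T_2$ terms, and Markov's inequality with $B=\nicefrac{\lambda}{128L_1R_0}$ on the $T_3$ terms, with the same $C_1$ and the same final rate bookkeeping. The one step you flag but do not carry out --- propagating that induction hypothesis despite the randomness of $T_3$ --- is handled in the paper by conditioning on the three events $\{T-1\in T_3(T),\ |T_3(T-1)|\le C_1-1\}$, $\{T-1\in T_3(T),\ |T_3(T-1)|\ge C_1\}$, and $\{T-1\notin T_3(T)\}$ and showing the weighted sum telescopes; also, the in-ball gradient bound $\norm{\nabla f(x_t)}\le 4L_0R_0\le\nicefrac{L_0}{L_1}$ is obtained from Proposition~\ref{prop: exponential} rather than Proposition~\ref{prop: smooth-equiv}.
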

\begin{proof}
    The main idea behind the proof lies in the careful analysis of regimes characterized by the relationship between the initial distance to the optimum ($R_0$) and $\frac{1}{L_1}$. To be more precise, we consider two different regimes: $4R_0 \leq \nicefrac{1}{L_1}$ and $4R_0 \geq \nicefrac{1}{L_1}$. Using this, we construct the proof as follows. 
    \begin{itemize}
        \item[\textbf{Part 1.}] First, we decompose \cref{lem: summing} according to the introduced regimes and define ``good'' probability events $E_k$, implying the desired result.
        \item[\textbf{Part 2.}] Next, we propose unified bounds for the terms from the first part, in both regimes as well.
        \item[\textbf{Part 3.}] The third part is related to the second regime only.
        \item[\textbf{Part 4.}] The fourth part concludes the proof, i.e., we show that $\PP\{E_k\}$ is large enough.
    \end{itemize}    
    
    \textbf{Part 1: Decomposition.}

    \textbf{\textit{Regime 1:}} $4R_0 \leq \nicefrac{1}{L_1}$. Let us denote the probabilistic event $E_k$: the inequalities 
    \begin{remarkbox}
        \begin{align*}
            - \sum\limits_{l=0}^{t-1} 2\gamma\ev{\theta_l, x_l - x^*} + \sum\limits_{l=0}^{t-1}2\gamma^2\sqn{\theta_l} &\leq R_0^2,\\
            R_k &\leq \sqrt{2}R_0
        \end{align*}
    \end{remarkbox}
    hold simultaneously for $t = 0, \ldots, k$. We want to show via induction that $\mathbb{P}\{E_k\} \geq 1 - \frac{k\delta}{K}$. The case of $k = 0$ is obvious. Then, let us assume that the event $E_{T-1}$ with $T \leq K$ holds with the probability $\mathbb{P}\{E_{T-1}\} \geq 1 - \frac{(T-1)\delta}{K}$. Also this event implies that $x_t \in B_{\sqrt{2}R_0}(x^*)$ for all $t = 0, \ldots, T-1$. Therefore, we have that $E_{T-1}$ implies
    \begin{align*}
        \norm{x_T - x^*} = \norm{x_{T-1} - \gamma g_T - x^*} \leq \norm{x_{T-1} - x^*} + \gamma\lambda \overset{\eqref{eq: gamma-choice}}{\leq} 2R_0.
    \end{align*}
    Consequently, $\{x_t\}_{t=0}^T \subseteq B_{2R_0}(x^*)$ follows from $E_{T-1}$. Therefore, the event $E_{T-1}$ implies
    \begin{align}
    \label{eq: first-regime}
        \norm{\nabla f(x_t)} \overset{\text{Prop.~\ref{prop: exponential}}}{\leq} L_0R_t\exp\left(L_1R_t\right) \leq \sqrt{2}L_0R_0\exp\left(\sqrt{2}L_1R_0\right) \leq 4L_0R_0 \leq \frac{L_0}{L_1}
    \end{align}
    for all $t = 0, \ldots, T-1$. Thus, we can apply \cref{lem: summing} (\textit{Option 1}): $E_{T-1}$ implies that
    \begin{align}
    \label{eq: thm-lem-0}
         \sum\limits_{l=0}^{t-1} \gamma(f(x_l) - f^*) &\leq \sqn{x_0 - x^*} - \sqn{x_t - x^*} - \sum\limits_{l=0}^{t-1} 2\gamma\ev{\theta_l, x_l - x^*} + \sum\limits_{l=0}^{t-1} 2\gamma^2\sqn{\theta_l}
    \end{align}
    holds for $t = 1, \ldots, T$. It is worth mentioning that \eqref{eq: lambda-choice} and \eqref{eq: gamma-choice} give $\gamma \leq \frac{1}{16L_0}$. What is more, event $E_{T-1}$ implies that
    \begin{align*}
        \sum\limits_{l=0}^{t-1} \gamma(f(x_l) - f^*) &\leq \sqn{x_0 - x^*} - \sqn{x_t - x^*} - \sum\limits_{l=0}^{t-1} 2\gamma\ev{\theta_l, x_l - x^*} + \sum\limits_{l=0}^{t-1}2\gamma^2\sqn{\theta_l} \leq 2R_0^2
    \end{align*}
    for $t = 1, \ldots, T-1$. Moreover, the bound $f(x_l) - f^* \geq 0$ with \eqref{eq: thm-lem-0} leads to
    \begin{align}
    \label{eq: bound-R_t-0}
         R_T^2 &\leq R_0^2 - \sum\limits_{t=0}^{T-1} 2\gamma\ev{\theta_t, x_t - x^*}+ \sum\limits_{t=0}^{T-1}2\gamma^2\sqn{\theta_t}.
    \end{align}
    Next, we define random vectors
    \begin{align*}
        \eta_t = \begin{cases}
            x_t - x^*, \qquad &\norm{x_t - x^*} \leq \sqrt{2}R_0,\\
            0, \qquad &\text{otherwise},
        \end{cases}
    \end{align*}
    for all $t = 0, \ldots, T-1$. According to the definition, $\eta_t$ is bounded with probability $1$:
    \begin{align*}
        \norm{\eta_t} \leq \sqrt{2}R_0.
    \end{align*}
    Moreover, the event $E_{T-1}$ implies $\norm{x_t - x^*} \leq \sqrt{2}R_0$ for all $t = 0, \ldots, T-1$. As a result, we get $\eta_t = x_t - x^*$ within this event. Now let us decompose \eqref{eq: bound-R_t-0} using the notation of $\theta_t^u$, $\theta_t^b$ and $\eta_t$:
    \begin{align}
    \label{eq: final-eq-descent-0}
        \sqn{x_T - x^*} &\leq R_0^2 - \underbrace{\sum_{t \in T_1(T) \cup T_2(T)} 2\gamma\ev{\theta_t^u, \eta_t}}_{\circledOne} - \underbrace{\sum_{t \in T_1(T) \cup T_2(T)} 2\gamma\ev{\theta_t^b, \eta_t}}_{\circledTwo} \nonumber\\&+ \underbrace{\sum_{t \in T_1(T) \cup T_2(T)}4\gamma^2\left[\sqn{\theta_t^u} - \mathbb{E}_{\xi_t}\left[\sqn{\theta_t^u}\right]\right]}_{\circledThree} + \underbrace{\sum_{t \in T_1(T) \cup T_2(T)}4\gamma^2\mathbb{E}_{\xi_t}\left[\sqn{\theta_t^u}\right]}_{\circledFour} \nonumber\\&+ \underbrace{\sum_{t \in T_1(T) \cup T_2(T)}4\gamma^2\sqn{\theta_t^b}}_{\circledFive},
    \end{align}
    where we also use the definitions of $T_1(T), T_2(T)$, and $T_3(T)$, and the fact that in this regime $T_3(T) \equiv 0$ for any $T \geq 0$.

    \textbf{\textit{Regime 2:}} $4R_0 \geq \nicefrac{1}{L_1}$ Similarly to the regime, let us denote the probabilistic event $E_k$: the inequalities 
    \begin{remarkbox}
        \begin{align*}
        - \sum_{l \in T_1(t) \cup T_2(t)} 2\gamma\ev{\theta_l, x_l - x^*} + \sum_{l \in T_1(t) \cup T_2(t)}2\gamma^2\sqn{\theta_l} &\leq R_0^2,\\
        - \sum_{l \in T_3(t)}2\gamma\ev{\hat{\theta}_l, x_l - x^*} &\leq \frac{\gamma \lambda|T_3(t)|}{32L_1},\\
        R_k &\leq \sqrt{2}R_0
    \end{align*}
    \end{remarkbox}
    hold simultaneously for $t = 0, \ldots, k$. If the first and third inequalities coincide with the previous case, the second inequality is also necessary for our analysis. We want to show via induction that $\mathbb{P}\{E_k\} \geq 1 - \frac{k\delta}{K} - \sum\limits_{r=0}^{k} \min\{r, C_1\}\delta_0\mathbb{P}\{|T_3(k)| = r\}$, where $C_1$ and $\delta_0$ will be defined later. The case of $k = 0$ is obvious. Then, let us assume that the event $E_{T-1}$ with $T \leq K$ holds with probability $\mathbb{P}\{E_{T-1}\} \geq 1 - \frac{(T-1)\delta}{K} - \sum\limits_{r=0}^{T-1} \min\{r, C_1\}\delta_0\mathbb{P}\{|T_3(T-1)| = r\}$. Also this event implies that $x_t \in B_{\sqrt{2}R_0}(x^*)$ for all $t = 0, \ldots, T-1$. Therefore, we have
    \begin{align*}
        \norm{x_T - x^*} = \norm{x_{T-1} - \gamma g_T - x^*} \leq \norm{x_{T-1} - x^*} + \gamma\lambda \overset{\eqref{eq: gamma-choice}}{\leq} 2R_0
    \end{align*}
    within event $E_{T-1}$. 
    Consequently, $\{x_t\}_{t=0}^T \subseteq B_{2R_0}(x^*)$ follows from $E_{T-1}$, and we can apply \cref{lem: summing} (\textit{Option 2}): event $E_{T-1}$ implies that inequality
    \begin{align}
    \label{eq: thm-lem}
         \sum_{l \in {T_1(t) \cup T_2(t)}} \gamma(f(x_l) - f^*) &\leq \sqn{x_0 - x^*} - \sqn{x_t - x^*} - \sum_{l \in T_1(t) \cup T_2(t)} 2\gamma\ev{\theta_l, x_l - x^*} \nonumber\\&+ \sum_{l \in T_1(t) \cup T_2(t)}2\gamma^2\sqn{\theta_l} - \sum_{l \in T_3(t)}2\gamma\ev{\hat{\theta}_l, x_l - x^*} - \frac{\gamma\lambda|T_3(t)|}{16L_1}
    \end{align}
    holds for $t = 1, \ldots, T$. Also let us clarify that $\gamma \leq \min\left\{\frac{1}{16L_0}, \frac{1}{16L_1 \lambda}\right\}$ due to \eqref{eq: gamma-choice}. What is more, the event $E_{T-1}$ implies that
    \begin{align*}
        \sum_{l \in {T_1(t) \cup T_2(t)}} \gamma(f(x_l) - f^*) &\leq \sqn{x_0 - x^*} - \sqn{x_t - x^*} - \sum_{l \in T_1(t) \cup T_2(t)} 2\gamma\ev{\theta_l, x_l - x^*} \nonumber\\&+ \sum_{l \in T_1(t) \cup T_2(t)}2\gamma^2\sqn{\theta_l} - \sum_{l \in T_3(t)}2\gamma\ev{\hat{\theta}_l, x_l - x^*} - \frac{\gamma\lambda|T_3(t)|}{16L_1} \\&\leq 2R_0^2 - \frac{\gamma\lambda|T_3(t)|}{32L_1}
    \end{align*}
    for $t = 1, \ldots, T-1$.  Moreover, $E_{T-1}$ with $f(x_t) - f^* \geq 0$ implies
    \begin{align*}
        0 \leq 2R_0^2 - \frac{\gamma \lambda|T_3(T-1)|}{32L_1} \Rightarrow |T_3(T-1)| \leq 64 \cdot 160 (L_1R_0)^2\ln\left(\frac{4K}{\delta}\right) 
    \end{align*}
    due to \eqref{eq: gamma-choice}. Therefore, the events $E_k$ and $E_k \cap \{|T_3(k)| \leq C_1 := 64 \cdot 160 (L_1R_0)^2\ln\left(\frac{4K}{\delta}\right)\}$ are \textit{equal}. What is more, from \eqref{eq: thm-lem} we have
    \begin{align}
    \label{eq: bound-R_t}
         \sqn{x_T - x^*} &\leq R^2 - \sum_{t \in T_1(T) \cup T_2(T)} 2\gamma\ev{\theta_t, x_t - x^*}+ \sum_{t \in T_1(T) \cup T_2(T)}2\gamma^2\sqn{\theta_t} \nonumber\\&- \sum_{t \in T_3(T)}2\gamma\ev{\hat{\theta}_t, x_t - x^*} - \frac{\gamma\lambda|T_3(T)|}{16L_1}
    \end{align}
    within event $E_{T-1}$. Next, we define random vectors
    \begin{align*}
        \eta_t = \begin{cases}
            x_t - x^*, \qquad &\norm{x_t - x^*} \leq \sqrt{2}R_0,\\
            0, \qquad &\text{otherwise},
        \end{cases}
    \end{align*}
    for all $t = 0, \ldots, T-1$. According to the definition, $\eta_t$ is bounded with probability $1$:
    \begin{align*}
        \norm{\eta_t} \leq \sqrt{2}R_0.
    \end{align*}
    Moreover, the event $E_{T-1}$ implies $\norm{x_t - x^*} \leq \sqrt{2}R_0$ for all $t = 0, \ldots, T-1$. As a result, we get $\eta_t = x_t - x^*$ within this event. Now let us decompose \eqref{eq: bound-R_t} using the notation of $\theta_t^u$, $\theta_t^b$ and $\eta_t$:
    \begin{align}
    \label{eq: final-eq-descent}
        R_T^2 &\leq R_0^2 - \underbrace{\sum_{t \in T_1(T) \cup T_2(T)} 2\gamma\ev{\theta_t^u, \eta_t}}_{\circledOne} - \underbrace{\sum_{t \in T_1(T) \cup T_2(T)} 2\gamma\ev{\theta_t^b, \eta_t}}_{\circledTwo} \nonumber\\&+ \underbrace{\sum_{t \in T_1(T) \cup T_2(T)}4\gamma^2\left[\sqn{\theta_t^u} - \mathbb{E}_{\xi_t}\left[\sqn{\theta_t^u}\right]\right]}_{\circledThree} + \underbrace{\sum_{t \in T_1(T) \cup T_2(T)}4\gamma^2\mathbb{E}_{\xi_t}\left[\sqn{\theta_t^u}\right]}_{\circledFour} \nonumber\\&+ \underbrace{\sum_{t \in T_1(T) \cup T_2(T)}4\gamma^2\sqn{\theta_t^b}}_{\circledFive} - \underbrace{\sum_{t \in T_3(T)}2\gamma\ev{\hat{\theta}_t, \eta_t}}_{\circledSix} - \frac{\gamma\lambda|T_3(T)|}{16L_1}.
    \end{align}
    
    \textbf{Part 2: Bounds for $\circledOne - \circledFive$.}
    
    In this part of the proof, we can bound terms $\circledOne - \circledFive$ from \eqref{eq: final-eq-descent-0} and \eqref{eq: final-eq-descent}.
    For \textit{Regime 1}, it is worth mentioning that event $E_{T-1}$ implies
    \begin{align}
    \label{eq: equal}
        T_1(T) = \{0, \ldots, T-1\}
    \end{align}
    due to \eqref{eq: first-regime}. What is more, according to \eqref{eq: lambda-choice}, we have
    \begin{align*}
        \norm{\nabla f(x_t)} \leq 4L_0R_0 \leq \frac{\lambda}{2}
    \end{align*} 
    for all $t = 0, \ldots, T-1$ within the event $E_{T-1}$. Considering the second regime ($4R_0 \geq \nicefrac{1}{L_1}$), by definition of $T_i(T)$ from \cref{lem: summing}, we have that for all $t \in T_1(T) \cup T_2(T)$
    \begin{align*}
        \norm{\nabla f(x_t)} \leq \frac{\lambda}{2}.
    \end{align*}
    Consequently, using \eqref{eq: equal} for the case $4R_0 \leq \nicefrac{1}{L_1}$, we will bound terms $\circledOne - \circledFive$ in the unified form. To continue, we can apply \cref{lem: clip-effect} to obtain that
    \begin{align}
    \label{eq: norm-u}
        \norm{\theta_t^u} \leq 2\lambda,
    \end{align}
    \begin{align}
    \label{eq: norm-b}
        \norm{\theta_t^b} \leq \frac{2^\alpha \sigma^\alpha}{\lambda^{\alpha - 1}},
    \end{align}
    \begin{align}
    \label{eq: exp-norm-u}
        \mathbb{E}_{\xi_t}\left[\norm{\theta_t^u}^2\right] \leq 18\lambda^{2 - \alpha}\sigma^\alpha
    \end{align}
    for all $t \in T_1(T) \cup T_2(T)$. Hence, we can apply \eqref{eq: norm-u}, \eqref{eq: norm-b} and \eqref{eq: exp-norm-u} to construct bounds for $\circledOne - \circledFive$.
    
    \textit{Upper bound for \circledOne}. First of all, we have
    \begin{align*}
        \mathbb{E}_{\xi_t}\left[-2\gamma \ev{\theta_t^u, \eta_t}\right] = 0,
    \end{align*}
    since $\mathbb{E}_{\xi_t}[\ \cdot\ ] = \mathbb{E}_{\xi_t}[\ \cdot\ | \xi_{t-1}, \xi_{t-2}, \ldots]$, $\mathbb{E}_{\xi_t}[\eta_t ] = \eta_t$, and $\mathbb{E}_{\xi_t}[\theta_t^u] = 0$. Moreover, 
    \begin{align*}
        \abs{-2\gamma \ev{\theta_t^u, \eta_t}} \leq 2\gamma \norm{\theta_t^u}\norm{\eta_t} \overset{\eqref{eq: norm-u}}{\leq} 6\gamma\lambda R_0 \overset{\eqref{eq: gamma-choice}}{\leq} \frac{3R_0^2}{20 \ln\left(\frac{4K}{\delta}\right)} = c.
    \end{align*}
    What is more, let us define $\sigma_t^2 = \mathbb{E}\left[4\gamma^2\ev{\theta_t^u, \eta_t}^2\right]$. Then, we get
    \begin{align*}
        \sigma_t^2 \leq \mathbb{E}_{\xi_t}\left[4\gamma^2\sqn{\theta_t^u}\sqn{\eta_t}\right] \leq 8\gamma^2R_0^2\mathbb{E}_{\xi_t}\left[\sqn{\theta_t^u}\right].
    \end{align*}
    As a consequence, we can apply Bernstein's inequality with $b=\frac{R_0^2}{5}$ and $G = \frac{R_0^4}{100\ln\left(\frac{4K}{\delta}\right)}$:
    \begin{align*}
        \mathbb{P}\left\{|\circledOne| > b \text{ and } \sum_{t \in T_1(T) \cup T_2(T)} \sigma_t^2 \leq G \right\} \leq 2\exp\left(-\frac{b^2}{2G + \nicefrac{2cb}{3}}\right) = \frac{\delta}{2K}.
    \end{align*}
    Thus, we get
    \begin{align*}
        \mathbb{P}\left\{|\circledOne| \leq b \text{ or } \sum_{t \in T_1(T) \cup T_2(T)} \sigma_t^2 > G\right\} \geq 1 - \frac{\delta}{2K}.
    \end{align*}
    Moreover, the event $E_{T-1}$ implies
    \begin{align*}
        \sum_{t \in T_1(T) \cup T_2(T)} \sigma_t^2 &\leq \sum_{t \in T_1(T) \cup T_2(T)}8\gamma^2R_0^2\mathbb{E}_{\xi_t}\left[\sqn{\theta_t^u}\right] \overset{\eqref{eq: exp-norm-u}}{\leq} 144\gamma^2\lambda^{2 - \alpha}\sigma^\alpha R_0^2(|T_1(T)| + |T_2(T)|) \\&\leq 144\gamma^2\lambda^{2 - \alpha}\sigma^\alpha R_0^2 K = \frac{144\gamma^2\lambda^{2}\sigma^\alpha R_0^2 K }{\lambda^{\alpha}}  \overset{\eqref{eq: lambda-choice}}{\leq} 16\gamma^2\lambda^2 R_0^2\ln\left(\frac{4K}{\delta}\right) \\ &\overset{\eqref{eq: gamma-choice}}{\leq} \frac{R_0^4}{100 \ln\left(\frac{4K}{\delta}\right)} = G.  
    \end{align*}

    \textit{Upper bound for \circledTwo.} From the event $E_{T-1}$ it follows that
    \begin{align*}
        -\sum_{t \in T_1(T) \cup T_2(T)} 2\gamma\ev{\theta_t^b, \eta_t} &\leq \sum_{t \in T_1(T) \cup T_2(T)} 2\gamma\norm{\theta_t^b}\norm{\eta_t} \overset{\eqref{eq: norm-b}}{\leq} \frac{4\cdot 2^\alpha \gamma \sigma^\alpha R_0 K}{\lambda^{\alpha - 1}} \\&= \frac{4\cdot 2^\alpha \gamma \lambda \sigma^\alpha R_0 K}{\lambda^{\alpha}} \overset{\eqref{eq: lambda-choice}, \eqref{eq: gamma-choice}}{\leq} \frac{16 R_0^2}{360} \leq \frac{R_0^2}{5}.
    \end{align*}
    \textit{Upper bound for \circledThree.} We bound \circledThree \ in the same way as \circledOne. First, we get
    \begin{align*}
        \E_{\xi_t}\left[{4\gamma^2\left[\sqn{\theta_t^u} - \mathbb{E}_{\xi_t}\left[\sqn{\theta_t^u}\right]\right]}\right] = 0.
    \end{align*}
    In addition, we have
    \begin{align*}
        \abs{4\gamma^2\left[\sqn{\theta_t^u} - \mathbb{E}_{\xi_t}\left[\sqn{\theta_t^u}\right]\right]} \overset{\eqref{eq: norm-u}}{\leq } 32\gamma^2\lambda^2 \overset{\eqref{eq: lambda-choice}, \eqref{eq: gamma-choice}}{\leq} \frac{R_0^2}{50 \ln\left(\frac{4K}{\delta}\right)} = c.
    \end{align*}
    Also let us define $\hat{\sigma}_t^2 = \E_{\xi_t}\left[16\gamma^4\left(\sqn{\theta_t^u} - \mathbb{E}_{\xi_t}\left[\sqn{\theta_t^u}\right]\right)^2\right]$. Thus, one can obtain
    \begin{align*}
        \hat{\sigma}_t^2 \leq c\E_{\xi_t}\abs{4\gamma^2\left[\sqn{\theta_t^u} - \mathbb{E}_{\xi_t}\left[\sqn{\theta_t^u}\right]\right]} \leq 8c\gamma^2\mathbb{E}_{\xi_t}\left[\sqn{\theta_t^u}\right].
    \end{align*}
    Consequently, we can apply Bernstein's inequality with $b = \frac{R_0^2}{5}$ and $G = \frac{cR_0^2}{100}$:
    \begin{align*}
        \mathbb{P}\left\{\abs{\circledThree} > b \ \ \ \ \text{ and } \sum_{t \in T_1(T) \cup T_2(T)} \hat{\sigma}_t^2 \leq G\right\} \leq 2\exp\left(-\frac{b^2}{2G + \nicefrac{2cb}{3}}\right) \leq \frac{\delta}{2K}.
    \end{align*}
    At the same time,
    \begin{align*}
        \mathbb{P}\left\{\abs{\circledThree} \leq b \ \ \ \ \text{ or } \sum_{t \in T_1(T) \cup T_2(T)}\hat{\sigma}_t^2 > G\right\} \geq 1 - \frac{\delta}{2K}.
    \end{align*}
    Moreover, the event $E_{T-1}$ implies
    \begin{align*}
        \sum_{t\in T_1(T) \cup T_2(T)} \hat{\sigma}_t^2 &\leq \sum_{t\in T_1(T) \cup T_2(T)} 8c\gamma^2\E_{\xi_t}\left[\sqn{\theta_t^u}\right]\overset{\eqref{eq: exp-norm-u}}{\leq} \frac{144c\gamma^2\lambda^2\sigma^\alpha K}{\lambda^\alpha} \overset{\eqref{eq: lambda-choice}}{\leq} 16c\gamma^2\lambda^2\ln\left(\frac{4K}{\delta}\right) \\ &\overset{\eqref{eq: gamma-choice}}{\leq} \frac{cR_0^2}{100} = G.
    \end{align*}
    \textit{Upper bound for \circledFour}. From $E_{T-1}$ it follows that
    \begin{align*}
        \sum_{t \in T_1(T) \cup T_2(T)}4\gamma^2\mathbb{E}_{\xi_t}\left[\sqn{\theta_t^u}\right] &\overset{\eqref{eq: exp-norm-u}}{\leq} 72\gamma^2\lambda^{2-\alpha}\sigma^\alpha K = \frac{72\gamma^2\lambda^{2}\sigma^\alpha K}{\lambda^\alpha} \overset{\eqref{eq: lambda-choice}}{\leq} 8\gamma^2\lambda^2\ln\left(\frac{4K}{\delta}\right) \\&\overset{\eqref{eq: gamma-choice}}{\leq} \frac{R_0^2}{200} \leq \frac{R_0^2}{5}.
    \end{align*}
    \textit{Upper bound for \circledFive}. The event $E_{T-1}$ implies
    \begin{align*}
        \sum_{t \in T_1(T) \cup T_2(T)}4\gamma^2\sqn{\theta_t^b} &\overset{\eqref{eq: norm-b}}{\leq} \frac{4\cdot2^{2\alpha}\gamma^2\sigma^{2\alpha}K}{\lambda^{2\alpha - 2}} \leq \frac{64\gamma^2\lambda^2\sigma^{2\alpha}K}{\lambda^{2\alpha}} \overset{\eqref{eq: lambda-choice}}{\leq} \gamma^2\lambda^2\ln^2\left(\frac{4K}{\delta}\right) \overset{\eqref{eq: gamma-choice}}{\leq} \frac{R_0^2}{5}.
    \end{align*}
    
    \textbf{Part 3: Bound of $\circledSix$.}
    
    This part is needed only for the second regime since for the first regime $|T_3(T)| = 0$. The main idea lies in the careful decomposition of each term from $\circledSix$. In the deterministic case, it was shown that $|T_3|$ is bounded by a constant \citep{gorbunov2024methods}. We aim to achieve a similar effect using just Markov's inequality. We start with the reformulation of each term from $\circledSix$ with the notation of $\hat{\theta}_t$ (see Table~\ref{tab:notation}):
    \begin{align}
    \label{eq: decompose-6}
        \ev{\hat{\theta}_t, \eta_t} &= \min\left\{1, \frac{\lambda}{\norm{\nabla f(x_t) + \xi_t}}\right\}\ev{\nabla f(x_t) + \xi_t, \eta_t} \nonumber\\&- \min\left\{1, \frac{\lambda}{2\norm{\nabla f(x_t)}}\right\}\ev{\nabla f(x_t), \eta_t} \nonumber\\&=
        \left[\min\left\{1, \frac{\lambda}{\norm{\nabla f(x_t) + \xi_t}}\right\} -  \min\left\{1, \frac{\lambda}{2\norm{\nabla f(x_t)}}\right\}\right]\ev{\nabla f(x_t), \eta_t} \nonumber\\&- \min\left\{1, \frac{\lambda}{\norm{\nabla f(x_t) + \xi_t}}\right\}\ev{\xi_t, \eta_t}\nonumber\\&= \left[\min\left\{1, \frac{\lambda}{\norm{\nabla f(x_t) + \xi_t}}\right\} - \frac{\lambda}{2\norm{\nabla f(x_t)}}\right]\ev{\nabla f(x_t), \eta_t} \nonumber\\&- \min\left\{1, \frac{\lambda}{\norm{\nabla f(x_t) + \xi_t}}\right\}\ev{\xi_t, \eta_t},  
    \end{align}
    where in the last equation we use $t \in T_3(T)$. Next, let us consider some $B$ such that $0 < B \leq \frac{\lambda}{2}$. We have
    \begin{align*}
        \mathbb{P}\left\{ \min\left\{1, \frac{\lambda}{\norm{\nabla f(x_t) + \xi_t}}\right\} - \frac{\lambda}{2\norm{\nabla f(x_t)}} \geq 0\right\} &= \mathbb{P}\left\{\frac{\lambda\norm{\nabla f(x_k)}}{\norm{\nabla f(x_t) + \xi_t}} \geq \frac{\lambda}{2}\right\} \\&= \mathbb{P}\left\{2\norm{\nabla f(x_k)} \geq  \norm{\nabla f(x_t) + \xi_t}\right\} \\&\geq \mathbb{P}\left\{2\norm{\nabla f(x_k)} \geq  \norm{\nabla f(x_t)} + \norm{\xi_t}\right\} \\&\geq \mathbb{P}\left\{\norm{\xi_t} \leq \frac{\lambda}{2}\right\} \\&\geq \mathbb{P}\left\{\norm{\xi_t} \leq B\right\},
    \end{align*}
    where we also use that $\norm{\nabla f(x_k)} \geq \frac{\lambda}{2}$. Moreover,
    \begin{align*}
        \mathbb{P}\left\{\norm{\xi_t} \leq B\right\} = \mathbb{P}\left\{\norm{\xi_t}^\alpha \leq B^\alpha\right\} \geq 1 - \frac{\sigma^\alpha}{B^\alpha}
    \end{align*}
    due to the Markov's inequality. Therefore, using \eqref{eq: decompose-6}, we obtain that
    \begin{align*}
        -2\gamma\ev{\hat{\theta}_t,\eta_t} &= -\left[\min\left\{1, \frac{\lambda}{\norm{\nabla f(x_t) + \xi_t}}\right\} - \frac{\lambda}{2\norm{\nabla f(x_t)}}\right]\ev{\nabla f(x_t), x_t - x^*} \nonumber\\&+ 2\gamma\min\left\{1, \frac{\lambda}{\norm{\nabla f(x_t) + \xi_t}}\right\}\ev{\xi_t, x_t - x^*} \\&\leq 2\gamma\norm{\xi_t}\norm{\eta_t},  
    \end{align*}
    where we use the convexity of $f$, identity $\eta_t = x_t - x^*$ within the event $E_{T-1}$, and inequality $\norm{\xi_t} \leq B$. What is more, the event $E_{T-1}$ with $\norm{\xi_t} \leq B$ implies
    \begin{align*}
        2\gamma\norm{\xi_t}\norm{\eta_t} \leq 4\gamma B R_0.
    \end{align*}
    Choosing $B = \frac{\lambda}{128 L_1 R_0}$, we finally have in this case
    \begin{align*}
        -2\gamma\ev{\hat{\theta}_t, x_t - x^*} \leq \frac{\gamma \lambda}{32L_1}.
    \end{align*}
    
    \textbf{Part 4: Final bound.}
    
    In this part, we combine the derived bounds and estimate the probability of $E_T$. First, let us denote
    \begin{align*}
        E_{\circledOne} &= \left\{|\circledOne| \leq \frac{R_0^2}{5} \text{ or } \sum_{t \in T_1(T) \cup T_2(T)} \sigma_t^2 > \frac{R_0^4}{100\ln\left(\frac{4K}{\delta}\right)}\right\},\\
        E_{\circledThree} &= \left\{|\circledThree| \leq \frac{R_0^2}{5} \text{ or } \sum_{t \in T_1(T) \cup T_2(T)} \sigma_t^2 > \frac{R_0^4}{5000\ln\left(\frac{4K}{\delta}\right)}\right\},\\
        E_{\text{Markov}} &=  \left\{
        \norm{\xi_{T-1}} \leq B
        \text{ or } (T-1) \notin T_3(T) \text{ or } \abs{T_3(T-1)} > C_1 - 1\right\}.
    \end{align*}
    According to the parts $1, 2$ and $3$, we obtain
    \begin{align*}
        \mathbb{P}\left\{E_{T-1}\right\} &\geq 1 - \frac{(T-1)\delta}{K} -\sum\limits_{r=0}^{T-1} \min\{r, C_1\}\mathbb{P}\{|T_3(T-1)| = r\},\\
        \mathbb{P}\left\{E_{\circledOne}\right\} &\geq 1 - \frac{\delta}{2K},\\ 
        \mathbb{P}\left\{E_{\circledThree}\right\} &\geq 1 - \frac{\delta}{2K},\\
        \mathbb{P}\left\{\overline{E}_{\text{Markov}}\right\} &\leq \frac{128^\alpha(L_1R_0)^\alpha\sigma^\alpha}{\lambda^\alpha}\cdot \PP\left\{(T-1) \in T_3(T) \text{ and } \abs{T_3(T-1)} \leq C_1 - 1\right\}
    \end{align*}
    with $C_1 = 64\cdot 160 (L_1R_0)^2 \ln\left(\frac{4K}{\delta}\right)$. Moreover, we have
    \begin{align*}
        1 - \frac{128^\alpha(L_1R_0)^\alpha\sigma^\alpha}{\lambda^\alpha} \geq 1 - \frac{128^\alpha(L_1R_0)^\alpha\sigma^\alpha\ln\left(\frac{4K}{\delta}\right)}{9\sigma^\alpha K} =  1 - \frac{128^\alpha(L_1R_0)^\alpha\ln\left(\frac{4K}{\delta}\right)}{9K} \geq 1 - \delta_0,
    \end{align*}
    with $K \geq \frac{128^\alpha\ln\left(\frac{4K}{\delta}\right)(L_1R_0)^\alpha}{9\delta_0}$. Next, we consider again two possible regimes.
    
    \textit{Regime 1:} $4R_0 \leq \nicefrac{1}{L_1}$. \textbf{Part 3} is not needed in this regime. Thus, we get that $E_{T-1} \cap E_{\circledOne} \cap E_{\circledThree}$ implies
    \begin{align*}
        R_T^2 \leq R_0^2 + \frac{R_0^2}{5} + \frac{R_0^2}{5}+ \frac{R_0^2}{5}+ \frac{R_0^2}{5}+ \frac{R_0^2}{5} = 2R_0^2,    
    \end{align*}
    which also guarantees that the event $E_T$ holds. Thus, we get
    \begin{align*}
        \mathbb{P}\{E_{T}\} \geq \mathbb{P}\{E_{T-1} \cap E_{\circledOne} \cap E_{\circledThree}\}  &\geq 1 - \mathbb{P}\{\overline{E}_{T-1}\} - \mathbb{P}\{\overline{E}_{\circledOne}\} - \mathbb{P}\{\overline{E}_{\circledThree}\} \geq 1 - \frac{T\delta}{K}.
    \end{align*}
    This finishes the inductive proof for $4R_0 \leq \nicefrac{1}{L_1}$. In particular, if $T = K$, $E_K$ implies
    \begin{align*}
        \frac{\sum\limits_{k=0}^{K-1} (f(x_k) - f^*)}{K} \leq \frac{2R_0^2}{\gamma K}.
    \end{align*}
    Substituting \eqref{eq: gamma-choice} in the inequality above, noting that we are in the first regime ($4R_0 \leq \frac{1}{L_1}$), and applying Jensen's inequality to the LHS, we conclude that 
    \begin{align}
    \label{eq: final-1}
         f\left(\frac{1}{K}\sum\limits_{k=0}^{K-1} x_k\right) - f^* = \tilde{\mathcal{O}}\left(\max\left\{\frac{L_0R_0^2}{K}, \frac{R_0\sigma}{K^{1 - \frac{1}{\alpha}}}\right\}\right)
    \end{align}
    with probability at least $1 - \delta$.

    \textit{Regime 2:} $4R_0 \geq \frac{1}{L_1}$. First, $E_{T-1}$ with $E_{\text{Markov}}$ implies
    \begin{align*}
        -\sum_{t \in T_3(T)}2\gamma\ev{\hat{\theta}_t, \eta_t} &= -\sum_{t \in T_3(T - 1)}2\gamma\ev{\hat{\theta}_t, \eta_t} - 2\gamma\ev{\hat{\theta}_{T-1}, \eta_{T-1}}\mathbb{I}\left\{T - 1 \in T_3(T)\right\} \leq \frac{\gamma \lambda |T_3(T)|}{32L_1},
    \end{align*}
    where we also use that $T_3(T-1) \subseteq T_3(T)$. Hence, the probability event $E_{T-1} \cap E_{\circledOne} \cap E_{\circledThree} \cap E_{\text{Markov}}$ implies
    \begin{align*}
        R_{T}^2 \leq R_0^2 + R_0^2 - \frac{\gamma \lambda|T_3(T)|}{32L_1},
    \end{align*}
    which also guarantees that the event $E_{T}$ holds. Thus, we get
    \begin{align*}
        \mathbb{P}\{E_{T}\} \geq \mathbb{P}\{E_{T-1} \cap E_{\circledOne} \cap E_{\circledThree} \cap E_{\text{Markov}}\} &\geq 1 - \mathbb{P}\{\overline{E}_{T-1}\} - \mathbb{P}\{\overline{E}_{\circledOne}\} - \mathbb{P}\{\overline{E}_{\circledThree}\} - \mathbb{P}\{\overline{E}_{\text{Markov}}\} \nonumber\\&\geq 1 - \frac{T\delta}{K} - \sum\limits_{r=0}^{T-1} \min\{r, C_1\}\delta_0\mathbb{P}\{|T_3(T-1)| = r\} \nonumber\\&- \delta_0\mathbb{P}\{T-1 \in T_3(T) \text{ and } \abs{T_3(T-1)} \leq C_1 - 1\},
    \end{align*}
    where the last term comes from the event $\overline{E}_{\text{Markov}}$. Next, let us consider the last two terms in the RHS of the inequality above. First, we introduce events
    \begin{align*}
        X &:= \{T-1 \in T_3(T) \text{ and } |T_3(T-1)| \leq C_1 - 1\},\\
        Y &:= \{T-1 \in T_3(T) \text{ and } |T_3(T-1)| \geq  C_1\},\\
        Z &:= \{T-1 \notin T_3(T)\}.
    \end{align*}
    Therefore, we get
    \begin{align*}
        \mathbb{P}\{|T_3(T-1)| = r\} &= \mathbb{P}\{|T_3(T-1)| = r|X\}\mathbb{P}\{X\} \\ &+ \mathbb{P}\{|T_3(T-1)| = r|Y\}\mathbb{P}\{Y\} \\ &+ \mathbb{P}\{|T_3(T-1)| = r|Z\}\mathbb{P}\{Z\}
    \end{align*}
    and
    \begin{align*}
        \mathbb{P}\{T-1 \in T_3(T) \text{ and } \abs{T_3(T-1)} \leq C_1 - 1\} &= \mathbb{P}\{T-1 \in T_3(T) \text{ and } \abs{T_3(T-1)} \leq C_1 - 1|X\}\mathbb{P}\{X\} \\ &+ \mathbb{P}\{T-1 \in T_3(T) \text{ and } \abs{T_3(T-1)} \leq C_1 - 1|Y\}\mathbb{P}\{Y\} \\ &+ \mathbb{P}\{T-1 \in T_3(T) \text{ and } \abs{T_3(T-1)} \leq C_1 - 1|Z\}\mathbb{P}\{Z\}.
    \end{align*}
    Next, we consider conditional probabilities with respect to $X, Y, Z$. According to the definition of $X$, we have
    \begin{align*}
        \sum\limits_{r=0}^{T-1} \min\{r, C_1\}\delta_0\mathbb{P}\{|T_3(T-1)| = r|X\} &+ \delta_0\mathbb{P}\{T-1 \in T_3(T) \text{ and } \abs{T_3(T-1)} \leq C_1 - 1|X\} \\&= \sum\limits_{r=0}^{C_1-1} r\delta_0\mathbb{P}\{|T_3(T-1)| = r|X\} + \delta_0 \\&= \sum\limits_{r=0}^{C_1-1} r\delta_0\mathbb{P}\{|T_3(T)| = r+1|X\} + \delta_0,
    \end{align*}
    where the first equation comes from $\mathbb{P}\{|T_3(T-1)| = r|X\} = 0$ for all $r = C_1, \ldots T-1$,
    and the second equation holds due to $\mathbb{P}\{A|B\} = \mathbb{P}\{A \cap B|B\}$. What is more, 
    \begin{align*}
        \sum\limits_{r=0}^{C_1-1} \mathbb{P}\{|T_3(T)| = r+1|X\} = 1
    \end{align*}
    since $\abs{T_3(T)}$ with respect to $X$ can be equal only to $1, \ldots, C_1$. Thus, we get
    \begin{align}
    \label{eq: p1}
        \sum\limits_{r=0}^{T-1} \min\{r, 
        C_1\}\delta_0\mathbb{P}\{|T_3(T-1)| = r|X\} &+ \delta_0\mathbb{P}\{T-1 \in T_3(T) \text{ and } \abs{T_3(T-1)} \leq C_1 - 1|X\} \nonumber\\&= \sum\limits_{r=0}^{C_1-1} r\delta_0\mathbb{P}\{|T_3(T)| = r+1|X\} + \delta_0 \nonumber\\&=
        \sum\limits_{r=0}^{C_1-1} r\delta_0\mathbb{P}\{|T_3(T)| = r+1|X\} + \sum\limits_{r=0}^{C_1-1} \delta_0\mathbb{P}\{|T_3(T)| = r+1|X\} \nonumber\\&= 
        \sum\limits_{r=0}^{C_1-1} (r+1)\delta_0\mathbb{P}\{|T_3(T)| = r+1|X\} \nonumber\\&=
        \sum\limits_{r=1}^{C_1} r\delta_0\mathbb{P}\{|T_3(T)| = r|X\} \nonumber\\&= \sum\limits_{r=0}^{T} \min\{r, C_1\}\delta_0\mathbb{P}\{|T_3(T)| = r|X\},
    \end{align}
    where in the last equation we add extra zeros for $r = 0$ and $r \geq C_1 + 1$. For event $Y$, we obtain 
    \begin{align*}
        \sum\limits_{r=0}^{T-1} \min\{r, C_1\}\delta_0\mathbb{P}\{|T_3(T-1)| = r|Y\} &+ \delta_0\mathbb{P}\{T-1 \in T_3(T) \text{ and } \abs{T_3(T-1)} \leq C_1 - 1|Y\} \\&= \sum\limits_{r=C_1}^{T-1} C_1\delta_0\mathbb{P}\{|T_3(T-1)| = r|Y\} \\&= \sum\limits_{r=C_1}^{T-1} C_1\delta_0\mathbb{P}\{|T_3(T)| = r+1|Y\},
    \end{align*}
    where in the first equation we apply the notation of $Y$, and in the second inequality we use that $\mathbb{P}\{A|B\} = \mathbb{P}\{A\cap B|B\}$. Consequently, we get
    \begin{align}
    \label{eq: p2}
        \sum\limits_{r=0}^{T-1} \min\{r, C_1\}\delta_0\mathbb{P}\{|T_3(T-1)| = r|Y\} &+ \delta_0\mathbb{P}\{T-1 \in T_3(T) \text{ and } \abs{T_3(T-1)} \leq C_1 - 1|Y\} \nonumber\\&= \sum\limits_{r=C_1}^{T-1} C_1\delta_0\mathbb{P}\{|T_3(T)| = r+1|Y\} \nonumber\\&=
        \sum\limits_{r=C_1+1}^{T} C_1\delta_0\mathbb{P}\{|T_3(T)| = r|Y\} \nonumber\\&=
        \sum\limits_{r=0}^{T} \min\{r, C_1\}\delta_0\mathbb{P}\{|T_3(T)| = r|Y\},
    \end{align}
    where we add extra zeros for $r = 0, \ldots, C_1$. For event $Z$, we get
    \begin{align}
    \label{eq: p3}
        \sum\limits_{r=0}^{T-1} \min\{r, C_1\}\delta_0\mathbb{P}\{|T_3(T-1)| = r|Z\} &+ \delta_0\mathbb{P}\{T-1 \in T_3(T) \text{ and } \abs{T_3(T-1)} \leq C_1 - 1|Z\} \nonumber\\&= \sum\limits_{r=0}^{T-1} \min\{r, C_1\}\delta_0\mathbb{P}\{|T_3(T-1)| = r|Z\} \nonumber\\&= 
        \sum\limits_{r=0}^{T-1} \min\{r, C_1\}\delta_0\mathbb{P}\{|T_3(T)| = r|Z\} \nonumber\\&= \sum\limits_{r=0}^{T} \min\{r, C_1\}\delta_0\mathbb{P}\{|T_3(T)| = r|Z\},
    \end{align}
    where in the first equation we use the notation of $Z$, in the second one we use $\mathbb{P}\{A|B\} = \mathbb{P}\{A \cap B|B\}$, and in the last equation we add $\mathbb{P}\{|T_3(T)| = T|Z\}$, which is equal to $0$. Multiplying \eqref{eq: p1}, \eqref{eq: p2} and \eqref{eq: p3} by $\mathbb{P}\{X\}, \mathbb{P}\{Y\}, \mathbb{P}\{Z\}$, respectively, and summing up, we derive
    \begin{align*}
        \sum\limits_{r=0}^{T-1} \min\{r, C_1\}\delta_0\mathbb{P}\{|T_3(T-1)| = r\} &+ \delta_0\mathbb{P}\{T-1 \in T_3(T) \text{ and } \abs{T_3(T-1)} \leq C_1 - 1\} \\&= \sum\limits_{r=0}^{T} \min\{r, C_1\}\delta_0\mathbb{P}\{|T_3(T)| = r\}.
    \end{align*}
    As a result, we have 
    \begin{align*}
         \mathbb{P}\{E_{T}\} \geq 1 - \frac{T\delta}{K} - \sum\limits_{r=0}^{T} \min\{r, C_1\}\delta_0\mathbb{P}\{|T_3(T)| = r\}.
    \end{align*}
    This concludes the inductive proof. In particular, taking $\delta_0 \eqdef \frac{\delta}{C_1}$ and
    \begin{equation*}
        K \geq \frac{128^\alpha\ln\left(\frac{4K}{\delta}\right)(L_1R_0)^\alpha}{9\delta_0} = C_1\cdot \frac{128^\alpha\ln\left(\frac{4K}{\delta}\right)(L_1R_0)^\alpha}{9\delta} = 64 \cdot 128^\alpha \cdot 160 \frac{(L_1R_0)^{2 + \alpha}\ln^2\left(\frac{4K}{\delta}\right)}{\delta},
    \end{equation*} 
    we get that
    \begin{align*}
        \PP\{E_{K}\} &\geq 1 - \delta - \sum\limits_{r=0}^{T} \min\{r, C_1\}\delta_0\mathbb{P}\{|T_3(K)| = r\} \geq  1 - \delta - C_1 \delta_0 = 1 - 2\delta.
    \end{align*} 
    In particular, $E_K$ implies
    \begin{align*}
        R_K^2 \leq 2R_0^2 - \frac{\gamma\lambda |T_3(K)|}{32L_1}.
    \end{align*}
    What is more, we have
    \begin{align*}
        \sum_{k \in T_1(K) \cup T_2(K)} \gamma(f(x_k) - f^*) \leq 2R_0^2 - \frac{\gamma\lambda |T_3(K)|}{32L_1}.
    \end{align*}
    Therefore, we get
    \begin{align*}
        \frac{1}{K - |T_3(K)|} \sum_{k \in T_1(K) \cup T_2(K)} \gamma(f(x_k) - f^*) \leq \frac{2R_0^2}{K-|T_3(K)|} - \frac{\gamma\lambda |T_3(K)|}{32L_1 (K-|T_3(K)|)}.
    \end{align*}
    Considering the RHS, it can be shown that it is a decreasing function of $|T_3(K)|$. Indeed, denoting 
    \begin{align*}
        \phi(x) = \frac{2R_0^2}{K-x} - \frac{\gamma\lambda x}{32L_1 (K-x)},
    \end{align*}
    one can obtain
    \begin{align*}
        {\phi}'(x) = \frac{2R_0^2}{(K-x)^2} - \frac{\gamma\lambda K}{32L_1 (K-x)^2} \leq 0
    \end{align*}
    due to the lower bound on $K$. Consequently, we get
    \begin{align*}
        \frac{1}{K - |T_3(K)|} \sum_{k \in T_1(K) \cup T_2(K)} \gamma(f(x_k) - f^*) \leq \frac{2R_0^2}{K}.
    \end{align*}
    Dividing both sides by $\gamma$, substituting \eqref{eq: gamma-choice}, and lower bounding the LHS, we obtain that
    \begin{align}
    \label{eq: final-2}
         \min_{k = 0, \ldots, K-1}(f(x_k) - f^*) = \mathcal{\Tilde{O}}\left(\max\left\{\frac{L_0R_0^2}{K}, \frac{L_1R_0^2\sigma}{K^{\frac{\alpha-1}{\alpha}}}\right\}\right)
    \end{align}
    with $K = \Omega\left(\frac{(L_1R_0)^{2+\alpha}\ln^2\left(\frac{4K}{\delta}\right)}{\delta}\right)$ holds with probability at least $1 - 2\delta$. Combining \eqref{eq: final-1} and \eqref{eq: final-2}, we finish the proof.
\end{proof}

\end{document}